\documentclass[a4paper,11pt]{article}
\usepackage{iwona,amsmath,amsthm,amsfonts,amssymb,enumerate,graphicx,float,wrapfig,calc,microtype,thmtools,underscore,mathtools}
\usepackage[usenames,dvipsnames,svgnames,table]{xcolor}
\usepackage[unicode=true]{hyperref}
\hypersetup{
colorlinks,
linkcolor={black},
citecolor={black},
urlcolor={blue!60!black},
pdftitle={Queue Layouts of Graphs with Bounded Degree and Bounded Genus},
pdfauthor={Vida Dujmovi{\'c}, Pat Morin, David~R.~Wood}} 
\usepackage[noabbrev,capitalise]{cleveref}

\usepackage[longnamesfirst,numbers,sort&compress]{natbib}
\makeatletter
\def\NAT@spacechar{~}
\makeatother

\usepackage[lmargin=29mm,rmargin=29mm,tmargin=29mm,bmargin=29mm]{geometry}
\renewcommand{\baselinestretch}{1.1}
\setlength{\footnotesep}{\baselinestretch\footnotesep}
\setlength{\parindent}{0cm}
\setlength{\parskip}{1.5ex}
\allowdisplaybreaks

\DeclarePairedDelimiter{\floor}{\lfloor}{\rfloor}

\DeclareMathOperator{\qn}{qn}

\renewcommand{\geq}{\geqslant}
\renewcommand{\leq}{\leqslant}

\DeclareMathOperator{\dist}{dist}

\renewcommand{\thefootnote}{\fnsymbol{footnote}}	
\allowdisplaybreaks

\newcommand{\arXiv}[1]{arXiv:\,\href{https://arxiv.org/abs/#1}{#1}}
\newcommand{\msn}[1]{MR:\,\href{https://www.ams.org/mathscinet-getitem?mr=MR#1}{#1}}

\newcommand{\doi}[1]{doi:\,\href{https://dx.doi.org/#1}{#1}}

\theoremstyle{plain}
\newtheorem{theorem}{Theorem}
\newtheorem{lemma}[theorem]{Lemma}
\newtheorem{corollary}[theorem]{Corollary}

\theoremstyle{definition}

\begin{document}

\author{
Vida Dujmovi{\'c}\,\footnotemark[3] 
\qquad Pat Morin\,\footnotemark[1] 
\qquad David~R.~Wood\,\footnotemark[5]}

\footnotetext[3]{School of Computer Science and Electrical Engineering, University of Ottawa, Ottawa, Canada (\texttt{vida.dujmovic@uottawa.ca}). Research  supported by NSERC and the Ontario Ministry of Research and Innovation.}

\footnotetext[1]{School of Computer Science, Carleton University, Ottawa, Canada (\texttt{morin@scs.carleton.ca}). Research  supported by NSERC.}

\footnotetext[5]{School of Mathematical Sciences, Monash   University, Melbourne, Australia  (\texttt{david.wood@monash.edu}). Research supported by the Australian Research Council.}

\sloppy

\title{\boldmath\bf Queue Layouts of Graphs \\
with Bounded Degree and Bounded Genus}
\maketitle


\begin{abstract} 
Motivated by the question of whether planar graphs have bounded queue-number, we prove that planar graphs with maximum degree $\Delta$ have queue-number $O(\Delta^{2})$, which improves upon the best previous bound of $O(\Delta^6)$. More generally, we prove that graphs with bounded degree and bounded Euler genus have bounded queue-number. In particular graphs with Euler genus $g$ and maximum degree $\Delta$ have queue-number $O(g+\Delta^{2})$. As a byproduct we prove that if planar graphs have bounded queue-number, then graphs of Euler genus $g$ have queue-number $O(g)$. 
 \end{abstract}

\renewcommand{\thefootnote}{\arabic{footnote}}

\section{Introduction}

 \citet{BFGMMRU18} recently proved that planar graphs with bounded (maximum) degree have bounded queue-number. We improve their bound and more generally show that graphs with bounded degree and bounded genus have bounded queue-number. 

First we introduce queue layouts and give the background to the above results. For a graph $G$ and integer $k\geq 0$, a \textit{$k$-queue layout} of $G$ consists of a linear ordering $\preceq$ of $V(G)$ and a partition $E_1,E_2,\dots,E_k$ of $E(G)$, such that for $i\in\{1,\dots,k\}$, no two edges in $E_i$ are nested with respect to $\preceq$. Here edges $vw$ and $xy$ are \textit{nested} if $v\prec x\prec y \prec w$.  The \textit{queue-number} of a graph $G$, denoted by $\qn(G)$, is the minimum integer $k$ such that $G$ has a $k$-queue layout. These definitions were introduced by Heath~et~al.~\citep{HLR92,HR92} as a dual to stack layouts (also called book embeddings). In a stack layout, no two edges in $E_i$ cross with respect to $\preceq$. Here edges $vw$ and $xy$ \textit{cross} if $v\prec x\prec w \prec y$

\citet{HLR92} conjectured that every planar graph has bounded queue number. This conjecture has remained open despite much research on queue layouts \citep{Wiechert17,DM-GD03,DujWoo05,HLR92,HR92,Hasunuma-DAM,Pemmaraju-PhD,RM-COCOON95,DMW05,DujWoo04,DPW04,Miyauchi-IEICE07,DMW17}. \citet{DujWoo04} observed that every graph with $m$ edges has a $O(\sqrt{m})$-queue layout using a random vertex ordering. Thus every planar graph with $n$ vertices has queue-number $O(\sqrt{n})$. \citet{DFP13} proved the first breakthrough on this topic, by showing that every planar graph with $n$ vertices has queue-number $O(\log^2 n)$. \citet{Duj15} improved this bound to $O(\log n)$ with a simpler proof. 

\citet{DMW17} established (poly-)logarithmic bounds for more general classes of graphs.\footnote{The \textit{Euler genus} of a graph $G$ is the minimum integer $k$ such that $G$ embeds in the orientable surface with $k/2$ handles (and $k$ is even) or the non-orientable surface with $k$ cross-caps. Of course, a graph is planar if and only if it has Euler genus 0; see \citep{MoharThom} for more about graph embeddings in surfaces. A graph $H$ is a \textit{minor} of a graph $G$ if a graph isomorphic to $H$ can be obtained from a subgraph of $G$ by contracting edges.} For example, they proved that every graph with $n$ vertices and Euler genus $g$ has queue-number $O(g+\log n)$, and that every graph with $n$ vertices excluding a fixed minor has queue-number $\log^{O(1)} n$. 

Recently, \citet{BFGMMRU18} proved a second breakthrough result, by showing that planar graphs with bounded degree have bounded queue-number. 

\begin{theorem}[\citep{BFGMMRU18}]
\label{Bekos}
Every planar graph with maximum degree $\Delta$ has queue-number at most $32(2\Delta-1)^6-1$.
\end{theorem}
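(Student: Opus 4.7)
The plan is to prove the theorem via an intermediate \emph{track layout}. A $t$-track layout partitions $V(G)$ into $t$ linearly ordered tracks so that no two edges between the same pair of tracks are nested across the tracks; by standard results on the track-number to queue-number conversion (Dujmovi\'c--P\'or--Wood), an $O(\Delta^{6})$-track layout will suffice to give a queue layout of the stated size. So it is enough to produce a track layout of $G$ with roughly $(2\Delta-1)^{6}$ tracks.

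First, I would fix a planar embedding of $G$ and a BFS layering $V_{0},V_{1},\dots,V_{\ell}$ from an arbitrary root, so that every edge lies within a single layer or between consecutive layers. The tracks will be indexed by pairs $(i,j)$ where $i$ is the BFS layer and $j$ runs over a bounded number of sub-tracks inside $V_{i}$. The coarse three-way split $i\bmod 3$ handles the inter-layer versus intra-layer distinction, but the real work is refining each $V_{i}$ into a bounded number of ordered sub-tracks that simultaneously controls intra-layer and inter-layer edges.

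Second, I would exploit the planar structure inside each layer. Each connected component $C$ of $G[V_{i}]$ is enclosed by a face of $G[V_{<i}]$ and so inherits a natural cyclic boundary-like order from the planar embedding. Using this order together with the degree bound $\Delta$, I would partition $V_{i}$ into $O(\Delta^{a})$ ordered sub-tracks so that every intra-layer edge fits between a bounded pair of sub-tracks (this is essentially a track-layout statement for the ``almost outerplanar'' graph sitting inside each face). I would then further refine $V_{i+1}$ using the fact that each vertex has at most $\Delta$ parents in $V_{i}$: children are grouped by the sub-track of their parents and by relative radial position around their parent, multiplying the number of sub-tracks by another $\Delta^{b}$ factor.

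The main obstacle, and the reason for a sixth power rather than a smaller one, is that the order chosen on $V_{i}$ must do double duty: it must avoid intra-layer nestings inside $G[V_{i}]$, and it must also be radially aligned with the order already fixed on $V_{i-1}$ so that inter-layer edges $V_{i-1}V_{i}$ do not nest. Reconciling these two constraints propagates a constant-power-of-$\Delta$ blow-up across several refinement steps (face structure, parent sub-track, sibling order, child order), and it is the composition of these refinements --- together with the standard track-to-queue conversion --- that would produce a bound of the form $32(2\Delta-1)^{6}-1$. The technically hardest step I expect is proving that the chosen intra-layer order can always be made radially compatible with the previous layer without a further $\Delta$-factor blow-up at each iteration.
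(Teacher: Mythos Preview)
This theorem is not proved in the present paper; it is quoted from \citet{BFGMMRU18}, and the paper only gives a high-level summary of their argument. That summary is, however, quite different from your plan. Bekos et~al.\ do \emph{not} go through track layouts. Instead they (i) subdivide each edge of $G$ at most three times to obtain a planar graph $G_1$ that is ``well-layered'' (every non-tree edge is a level edge between degree-$2$ leaves of a BFS tree); (ii) show that $G_1$ admits an $O(\Delta)$-queue layout directly, via the partition of each BFS layer into groups $V_{i,a}$ described in \cref{WellLayered}; and (iii) apply the generic unsubdivision lemma of \citet{DujWoo04}, which turns a $k$-queue layout of a $(\le c)$-subdivision into an $O(k^{2c})$-queue layout of the original graph. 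With $k=O(\Delta)$ and $c=3$ this yields the bound $32(2\Delta-1)^6-1$. The sixth power is thus an artefact of unsubdividing, not of a six-fold refinement of tracks.

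Your proposal, by contrast, is not yet a proof. You write that you would partition each $V_i$ into $O(\Delta^{a})$ sub-tracks and then refine by another $\Delta^{b}$ factor, but you never fix $a$ or $b$, never define the sub-tracks, and you explicitly flag the key compatibility step (making the intra-layer order radially consistent with the previous layer) as something you do not yet know how to carry out. That step is exactly where the difficulty lies: the non-tree binding edges and level edges interact badly with a naive radial order, and it is to tame this interaction that Bekos et~al.\ pass to the subdivided well-layered graph in the first place. Without either the subdivision trick or a concrete substitute for it, the plan as written has a genuine gap at its core rather than merely a missing detail. If you want to recover the specific constant $32(2\Delta-1)^6-1$, you will almost certainly need the subdivision/unsubdivision route, since that constant comes directly from the unsubdivision lemma applied with $c=3$.
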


Note that bounded degree alone is not enough to ensure bounded queue-number. In particular, \citet{Wood-QueueDegree} proved that for every integer $\Delta\geq 3$ and all sufficiently large $n$, there are graphs with $n$ vertices, maximum degree $\Delta$, and queue-number $\Omega(\sqrt{\Delta} n^{1/2-1/\Delta})$.

The first contribution of this paper is to improve the bound of \citet{BFGMMRU18} from $O(\Delta^6)$ to $O(\Delta^{2})$. 

\begin{theorem}
\label{Planar}
Every planar graph with maximum degree $\Delta$ has queue-number at most $12\Delta^2+16\Delta+3$. 
\end{theorem}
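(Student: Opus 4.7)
The plan is to construct a BFS-based linear order and edge-partition that exploits both planarity and bounded degree. Fix a plane embedding of $G$, pick an arbitrary root $r$, and let $V_i$ denote the set of vertices at distance $i$ from $r$; let $T$ denote the associated BFS tree. Because of the BFS property, every non-tree edge either lies inside a single layer $V_i$ (intra-layer) or joins two consecutive layers $V_i,V_{i+1}$ (inter-layer). I would use a global linear order $\preceq$ on $V(G)$ in which all of $V_i$ precedes all of $V_{i+1}$, and within each layer the order is obtained by traversing the outer boundary of $G[V_0\cup\cdots\cup V_i]$ in the plane embedding, cut into a linear order by a DFS rule on $T$ so that faces separating $V_{i+1}$ into several enclosed arcs are handled consistently.

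Next I would partition $E(G)$ into three families and bound each separately. First, the tree $T$ contributes a single queue, since when vertices are sorted by BFS layer, tree edges between $V_i$ and $V_{i+1}$ cannot nest. Second, non-tree intra-layer edges live inside the planar subgraphs $G[V_i]$; a local-planarity argument based on the boundary-traversal order bounds the number of required queues by $O(\Delta)$. Third, the main bulk comes from inter-layer non-tree edges: for each such edge $uv$ with $u \in V_i$, $v \in V_{i+1}$, I would assign a label $(a,b) \in [\Delta]\times[\Delta]$, where $a$ is the rank of $v$ among $u$'s neighbours in the cyclic order around $u$ induced by the embedding, and $b$ is the symmetric rank at $v$. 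There are at most $\Delta^2$ labels, and I would show that edges sharing a label form a non-nesting set in $\preceq$.

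The principal obstacle is verifying non-nesting for edges with the same label. Two inter-layer edges sharing a label are parallel in the planar embedding relative to both endpoints, and the construction of $\preceq$ is designed precisely so that such edges appear in the same relative order on both sides of the layer. Making this rigorous requires care at inner faces of $G[V_0 \cup \cdots \cup V_i]$, whose boundaries are cycles that must be unrolled without introducing artificial nestings; this is the role of the DFS rule. Adding a single queue for $T$, $O(\Delta)$ queues for intra-layer edges on each side of each layer, and $O(\Delta^2)$ queues for inter-layer edges should recover the claimed bound of $12\Delta^2 + 16\Delta + 3$.
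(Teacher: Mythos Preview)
Your proposal has a genuine gap at the central step. The claim that inter-layer edges sharing a local rank label $(a,b)$ cannot nest is not justified, and in fact fails. The label $(a,b)$ records only the cyclic position of each endpoint among its neighbour list in the embedding; nesting in $\preceq$ is a global property of the layer orderings, and there is no mechanism linking the two. Concretely, once the within-layer ordering is fixed (say, the clockwise order on the concentric circle $C_i$), every \emph{direct} binding edge already lies in a single queue with no labelling at all, while the \emph{hooked} binding edges---those that cross $C_{i+1}$ and wrap around---are precisely where a local rank label loses control: two hooked edges $u_1w_n$ and $u_2w_{n-1}$ with $u_1\prec u_2$ in $V_i$ and $w_{n-1}\prec w_n$ in $V_{i+1}$ can easily receive the same $(a,b)$ yet nest. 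Your sketch acknowledges that ``inner faces whose boundaries must be unrolled'' are the obstacle, but the DFS rule you invoke is not specified and cannot by itself repair this.

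The paper's route is structurally different. It first subdivides every non-tree edge (twice for level edges, three times for binding edges) to obtain a \emph{well-layered} graph $G'$ in which every non-tree edge is a level edge between degree-$2$ leaves. On $G'$ it defines a global function $g(v)=\lfloor m(v)/\Delta^{\ell(v)}\rfloor$, where $m(v)$ is computed from distances in the dual spanning tree (co-tree) of a multigraph obtained by replicating each non-tree edge $\Delta^{\ell}$ times. This partitions each layer $V_i$ into groups $V_{i,a}=\{v:g(v)=a\}$ with the key span property that every tree edge runs from $V_{i,a}$ to some $V_{i+1,b}$ with $b-\Delta a\in\{0,\dots,2\Delta-1\}$, and every non-tree binding edge with $b-\Delta a\in\{-1,\dots,2\Delta\}$. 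Within each pair $(V_{i,a},V_{i+1,b})$ the hooked edges are then handled by a further $6\Delta$-way split using parameters $(\eta,\gamma)$ coming from the subdivision vertices. The final count is $(2\Delta+2)(6\Delta+1)$ for binding edges plus $2\Delta+1$ for level edges, giving exactly $12\Delta^2+16\Delta+3$. None of this grouping structure is present in your proposal; in particular your scheme, even if it worked, would give roughly $\Delta^2+O(\Delta)$ queues, not the stated constant.

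Two smaller gaps: your claim that $T$ fits in one queue is only true once the within-layer ordering is the clockwise circle order (not an arbitrary boundary traversal), and your $O(\Delta)$ bound for intra-layer edges is unsupported---level edges drawn outside $C_i$ form a $1$-stack layout, which says nothing about queue-number. The paper needs the same group partition to get $2\Delta$ queues inside each $V_{i,a}$ and one queue between adjacent groups.
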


We extend this result by showing that graphs with bounded Euler genus and bounded degree have bounded queue-number. 

\begin{theorem}
\label{Genus}
Every graph with Euler genus $g$ and maximum degree $\Delta$ has queue-number at most  
$4g+36\Delta^2+48\Delta+9$. 
\end{theorem}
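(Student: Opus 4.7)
\textbf{The plan} is to reduce Theorem~\ref{Genus} to the planar case (Theorem~\ref{Planar}) via a topological planarization. Fixing a 2-cell embedding of $G$ in a surface $\Sigma$ of Euler genus $g$, I would identify a planarizing edge set $X \subseteq E(G)$ with $|X| = O(g)$ --- obtained, for instance, by cutting $\Sigma$ along at most $2g$ non-separating simple curves that meet $G$ only in edges --- so that $G_0 := G - X$ is planar with maximum degree at most $\Delta$. Theorem~\ref{Planar} applied to $G_0$ then supplies a vertex ordering $\preceq$ on $V(G)$ and a partition of $E(G_0)$ into $12\Delta^2 + 16\Delta + 3$ queues.

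\textbf{The remaining task} is to accommodate the edges of $X$ in this layout. The form of the target bound, $4g + 3(12\Delta^2 + 16\Delta + 3)$, is very suggestive: if one could simply add $X$ at a cost of just one queue per edge, the bound would be the sharper $O(g) + 12\Delta^2 + 16\Delta + 3$. The factor of~$3$ multiplying the planar bound instead hints that the proof decomposes $G$ into something like three planar subgraphs sharing a common vertex ordering, each handled by Theorem~\ref{Planar}, and then packages the $O(g)$ ``boundary'' edges arising from the handles of $\Sigma$ into $4g$ further queues (about $4$ per handle).

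\textbf{The hard part} will be synchronising the three planar queue layouts on a common vertex ordering of $V(G)$. Theorem~\ref{Planar} supplies its own ordering for any given planar input, but here one needs a single ordering simultaneously good for three different planar subgraphs arising from the embedding. I expect this will require unfolding the proof of Theorem~\ref{Planar} rather than invoking it as a black box: one uses a BFS layering of $G$ that is compatible with the topological cuts, and shows that the three planar pieces admit queue layouts consistent with that layering. Verifying that the $O(g)$ cut edges then contribute only $4g$ additional queues should be comparatively routine once the layering is set up, as each handle of $\Sigma$ generates only a bounded amount of nesting among the cut edges.
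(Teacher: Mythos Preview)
Your proposal has a genuine gap at the very first step. The claim that a graph of Euler genus $g$ admits a planarizing \emph{edge} set $X$ with $|X|=O(g)$ is false: the toroidal grid $C_n\times C_n$ has Euler genus $2$, yet any edge set whose removal leaves a planar graph has size $\Omega(n)$. Cutting the surface along $O(g)$ curves does planarize the surface, but those curves may cross arbitrarily many edges of $G$, so this does not translate into a small edge-deletion set. Everything downstream in your plan rests on this step, so the proposal as written cannot be completed.

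The paper's route is different in two ways. First, it deletes a \emph{vertex} set $Z$, not an edge set, and it does not bound $|Z|$ globally; instead $Z$ is the union of $2g$ root-to-leaf paths in a BFS tree of $G$, so $Z$ meets each BFS layer $V_i$ in at most $2g$ vertices (\cref{MakePlanar}). Second, the factor $3$ in $3k+4g$ does not come from three planar pieces sharing an ordering. It comes from a reordering lemma (\cref{reorder}): any $k$-queue layout of a graph can be turned into a $3k$-queue layout whose vertex order respects a prescribed layering, by splitting each queue into intra-layer, forward inter-layer, and backward inter-layer edges. One applies this to the black-box $k$-queue layout of the planar graph $G-Z$ (from \cref{Planar}) and the BFS layering of $G$ restricted to $G-Z$. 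With the ordering now layered, the vertices of $Z$ are spliced into their layers, and the edges incident to $Z$ cost only $4g$ further queues: one queue for each of the $\le 2g$ positions in $Z\cap V_j$, doubled by an odd/even parity on $j$ so that no two edges assigned the same queue can nest. So \cref{Planar} really is used as a black box; no unfolding is needed, contrary to your expectation.
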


We remark that using well-known constructions \citep{DMW05,DPW04,SubQuad},  \cref{Genus} implies that graphs with bounded Euler genus and bounded degree have bounded track-number, which in turn can be used to prove linear volume bounds for three-dimensional straight-line grid drawings of the same class of graphs. These results can also be extended for graphs with bounded degree that can be drawn in a surface of bounded Euler genus with a bounded number of crossings per edge (using \citep[Theorem~6]{DujWoo05}). We omit all these details.


The proof of \cref{Genus} uses \cref{Planar} as a `black box'. Starting with a graph $G$ of bounded Euler genus and bounded degree, we construct a planar  subgraph $G'$ of $G$. We then apply \cref{Planar} to obtain a queue layout of $G'$, from which we construct a queue layout of $G$. This approach suggests a direct connection between the queue-number of graphs with bounded Euler genus and planar graphs, regardless of degree considerations. The following theorem establishes this connection. A class of graphs is \emph{hereditary} if it is closed under taking induced subgraphs. 

\begin{theorem}
\label{general}
Let $\mathcal{G}$ be a hereditary class of graphs, such that every planar graph in $\mathcal{G}$ has queue-number at most $k$. Then every graph in $\mathcal{G}$ with Euler genus $g$ has queue-number at most $3k+4g$.
\end{theorem}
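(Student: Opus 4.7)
The plan is to reduce \cref{general} to the planar hypothesis via the surface embedding. Let $G \in \mathcal{G}$ have Euler genus $g$, fix an embedding in a surface $\Sigma$ of Euler genus $g$, and invoke the classical topological fact that $\Sigma$ admits a cut system of at most $g$ simple closed curves whose removal yields a topological disk. Routed along the graph, this picks out $O(g)$ cycles $C_1,\dots,C_r$ in $G$; set $U := V(C_1) \cup \cdots \cup V(C_r)$. Cutting flattens the surface, so the induced subgraph $H := G - U$ embeds in the disk and is planar. Since $\mathcal{G}$ is hereditary, $H \in \mathcal{G}$, and the hypothesis yields a $k$-queue layout $(\preceq_H, E_1, \dots, E_k)$ of $H$.

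The remaining work is to extend $(\preceq_H, E_1, \dots, E_k)$ to a queue layout of all of $G$ using at most $2k + 4g$ additional queues. I would do this by identifying two further induced planar subgraphs $G_1, G_2$ of $G$ --- obtained by removing two complementary subsets of $U$ chosen using the cycle structure (e.g.\ alternate vertices along each $C_i$) --- so that every edge of $G$ either lies in $E(H)$, or is an edge of $G_1$ or $G_2$, or lies on a cut cycle $C_i$. The hypothesis then supplies a $k$-queue layout of each of $G_1$ and $G_2$ (contributing $2k$ queues), and the cycle edges, concentrated on $O(g)$ cycles, absorb into $4g$ queues by a direct cycle-layout argument. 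A carefully chosen common linear order of $V(G)$ makes all three queue layouts simultaneously realisable on $G$, giving the required $(3k+4g)$-queue layout.

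The hardest part will be establishing the induced planarity of $G_1$ and $G_2$. This should follow from a topological argument: removing alternate vertices from each cut cycle ``opens up'' that cycle's handle, and doing so simultaneously for all $r$ cycles flattens the whole embedding to the plane, as long as the remaining vertices of each cycle lie entirely on one side of the cut curve. The choice of alternate vertices must be coordinated with the rotation system at each cycle vertex so that the edges to $V(H)$ respect the cut. Formalising these arguments --- and then merging the three vertex orderings into a single order of $V(G)$ that is compatible with all three queue partitions --- is where most of the technical work lies; once done, the bound $3k+4g$ drops out.
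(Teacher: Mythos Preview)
Your plan has two genuine gaps that the paper's argument avoids, and they are precisely the steps you flag as ``where most of the technical work lies''.

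First, the claimed planarity of $G_1$ and $G_2$ is neither proved nor, as stated, true in general. After cutting along the curves you obtain a disk whose boundary carries two copies of each cut cycle; keeping a vertex $v\in U_2$ means re-identifying its two boundary copies. Doing this for \emph{all} kept vertices simultaneously amounts to adding a matching on the boundary cycle and contracting it, and such a matching is typically crossing (already for the torus polygon $aba^{-1}b^{-1}$ the pairs coming from $a$ interleave with those from $b$). Whether the resulting identifications can always be realised planarly is a nontrivial topological claim that you have not established, and ``alternate vertices'' does nothing to control the interleaving pattern across different cut cycles. Second --- and more fundamentally --- even granting three planar subgraphs, you produce three $k$-queue layouts with three \emph{unrelated} vertex orderings $\preceq_H,\preceq_1,\preceq_2$. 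A queue layout of $G$ needs a \emph{single} ordering of $V(G)$; there is no mechanism in your proposal for reconciling the three, and in general no common ordering exists that simultaneously realises all three partitions as queues. The same issue infects the ``$4g$ queues for cycle edges'' claim: a cycle needs one queue only if the ordering respects the cyclic order, but you have no control over where the (arbitrarily many) vertices of the $C_i$ land in the final ordering.

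The paper takes a completely different route that sidesteps both problems. It fixes a BFS layering $V_0,\dots,V_t$ of $G$ from the outset and uses it as a common scaffold for the final ordering. The planarising set $Z$ is not a union of cut cycles but a union of $2g$ root-to-leaf paths in a BFS tree (\cref{MakePlanar}), so $|Z\cap V_i|\le 2g$ for every layer --- this bounded-per-layer property is the crucial structural fact, and it is what makes the $4g$ queues for edges touching $Z$ work. The reconciliation-of-orderings problem is solved by \cref{reorder}: any $k$-queue layout of the planar graph $G-Z$ can be reordered, at the cost of a factor~$3$, to respect the layering $V_0\setminus Z,\dots,V_t\setminus Z$. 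Then the $\le 2g$ vertices of $Z$ per layer are simply inserted at the front of each layer block. Your proposal has no analogue of either ingredient.
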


\cref{Genus} is an immediate corollary of \cref{Planar,general}, where $\mathcal{G}$ is the class of graphs with maximum degree at most $\Delta$.  \cref{general}, where $\mathcal{G}$ is the class of all graphs, implies the following result of interest:

\begin{corollary}
If every planar graph has queue-number at most $k$, then every graph with Euler genus $g$ has queue-number at most $3k+4g$.
\end{corollary}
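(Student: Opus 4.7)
The plan is to apply \cref{general} with $\mathcal{G}$ taken to be the class of all (finite simple) graphs. This class is trivially hereditary, since any induced subgraph of a graph is a graph, so the setup of \cref{general} applies verbatim. Under the hypothesis of the corollary, every planar graph in $\mathcal{G}$ has queue-number at most $k$, which is precisely the condition required by \cref{general}. Therefore every graph in $\mathcal{G}$ with Euler genus $g$ has queue-number at most $3k+4g$, which is the desired conclusion.

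So the corollary requires no work beyond invoking \cref{general}; its entire content is a change of viewpoint, reading \cref{general} through the lens of the universal class. The substantive content lies in \cref{general} itself, and the only conceivable obstacle in the corollary is recognising that ``the class of all graphs'' genuinely qualifies as a hereditary class to which \cref{general} may be applied, which is immediate.

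It is worth noting why this packaging is interesting: the hypothesis ``every planar graph has queue-number at most $k$'' is exactly the \citet{HLR92} conjecture (if finite $k$ exists), so the corollary says that resolving the planar case positively would automatically resolve the bounded-genus case up to an additive $4g$. Thus the corollary advertises \cref{general} as a conditional reduction, making \cref{general}'s bounded-degree application (\cref{Genus}) only a first instantiation of a more general transfer principle.
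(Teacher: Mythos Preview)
Your proposal is correct and matches the paper's approach exactly: the paper states immediately before the corollary that it follows from \cref{general} with $\mathcal{G}$ taken to be the class of all graphs, which is precisely what you do.
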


For a graph $G$ and a set $A\subseteq V(G)$, let $G[A]$ be the subgraph of $G$ induced by $A$, which has vertex set $A$ and edge set $\{vw\in E(G): v,w\in A\}$. For disjoint sets $A,B\subseteq V(G)$, let $G[A,B]$ be the bipartite graph with bipartition $\{A,B\}$ and edge set $\{vw\in E(G): v\in A, w\in B\}$. 

\section{Planar Graphs of Bounded Degree}

This section proves \cref{Planar}. The proof is inspired by the proof of \cref{Bekos} by \citet{BFGMMRU18}. Here is high-level overview of their proof for a planar graph $G$ with maximum degree $\Delta$. First, \citet{BFGMMRU18} construct a particular planar graph $G_1$ obtained from $G$ by subdividing each edge at most three times. Then they construct a planar graph $G_2$ from $G_1$ by replacing certain edges by pairs of trees and a perfect matching between their leaves. $G_2$ is called a `$\Delta$-matched' graph. The heart of the proof of \citet{BFGMMRU18} is to construct a $O(\Delta)$-queue layout of any $\Delta$-matched graph, and thus of $G_2$. They then observe that the queue layout of $G_2$ also gives a $O(\Delta)$-queue layout of $G_1$. Finally, they use a generic lemma of \citet{DujWoo04}, which says that if some $(\leq c)$-subdivision of a graph has a $k$-queue layout, then the original graph has a $O(k^{2c})$-queue layout.  \citet{BFGMMRU18} apply this result with $k=O(\Delta)$ and $c=3$, to obtain a $O(\Delta^6)$-queue layout of $G$. 

It should be mentioned that there is a straightforward way to improve this $O(\Delta^6)$ bound. \cref{NewUnsubdivide} in Appendix~1 shows that if some $(\leq c)$-subdivision of a graph has a $k$-queue layout for some fixed $c$, then the original graph has a $O(k^{c+1})$-queue layout. Moreover, in the proof of \citet{BFGMMRU18}, for every edge $e$ of $G$ that is subdivided three times, one of the edges in the subdivision of $e$ is assigned to a single queue ($\mathcal{Q}_0$ in their notation). This observation, in conjunction with the proof of \cref{NewUnsubdivide}, leads to a $O(\Delta^3)$-queue layout of $G$.

Our proof of \cref{Planar} initially follows a similar strategy. Starting with a planar graph $G$ with maximum degree $\Delta$, we consider the $(\leq 3)$-subdivision $G_1$ of $G$  constructed by \citet{BFGMMRU18}. Note that \citet{BFGMMRU18} explain in Section~3.3 of their paper that one can work directly with $G_1$ instead of the $\Delta$-matched graph $G_2$, and this is what we choose to do. The key properties of $G_1$ are summarised in the definition of `well-layered' below. We then construct a partition of $V(G_1)$ with several desirable properties (see \cref{WellLayered}). This partition is implicit in the proof of \citet{BFGMMRU18}---there is really nothing new in this part of our proof. 

The main point of difference between our proof and that of \citet{BFGMMRU18} is that we do not apply the generic `unsubdividing' lemma of \citet{DujWoo04}. Instead we refine the partition of $V(G_1)$ to obtain a similar partition of $V(G)$ (see \cref{Structure}). From this partition one can determine a $O(\Delta^2)$-queue layout of $G$. Note that in this $O(\Delta^2)$-queue layout, the vertex ordering is identical to that used by \citet{BFGMMRU18}, only the queue assignment is different. This fact shows the value in focusing on structural partitions rather than the final queue layout. 

The following definitions are key concepts in our proofs (and that of several other papers on queue layouts \citep{DMW17,DMW05,DujWoo04,BFGMMRU18}). A \emph{layering} of a graph $G$ is a partition $(V_0,V_1,\dots,V_t)$ of $V(G)$ such that for every edge $vw\in E(G)$, if $v\in V_i$ and $w\in V_j$, then $|i-j| \leq 1$. If $r$ is a vertex in a connected graph $G$ and $V_i:=\{v\in V(G):\dist_G(r,v)=i\}$ for all $i\geq 0$, then $(V_0,V_1,\dots,V_t)$ is called a \textit{BFS layering} of $G$, where $t:=\max\{\dist_G(r,v):v\in V(G)\}$. Associated with a bfs layering is a \emph{bfs spanning tree} $T$ obtained by choosing, for each non-root vertex $v\in V_i$ with $i\geq 1$, a neighbour $w$ in $V_{i-1}$, and adding the edge $vw$ to $T$. Thus $\dist_T(r,v)=\dist_G(r,v)$ for each vertex $v$ of $G$. When the spanning tree $T$ is obvious from the context, we call edges in $T$ \emph{tree edges} and edges not in $T$ \emph{non-tree edges}. An edge $vw\in E(G)$ with $v,w\in V_i$ for some $i\geq 0$ is called a \emph{level} edge. An edge $vw\in E(G)$ with $v\in V_i$ and $w\in V_{i+1}$ for some $i\geq 0$ is called a \emph{binding} edge. Every tree edge is \emph{binding}. 

The following lemma of \citet{Pupyrev17} shows that every planar graph has a drawing that highlights particular aspects of a BFS layering, as illustrated in \cref{EdgeTypes}. 

\begin{lemma}[\citep{Pupyrev17}] 
\label{concentric}
For every connected planar graph $G$ and every vertex $r$ of $G$, if $T$ is the BFS tree and $(V_0,V_1,\dots,V_t)$ is the BFS layering of $G$ rooted at $r$, then there is a drawing of $G$ in $\mathbb{R}^2$ with the $r$ at the origin and on the outer-face, such that for $i\in\{1,2,\dots,t\}$, 
\begin{itemize}
\item the vertices in $V_i$ are drawn on a circle $C_i$ of radius $R_i$ centred at the origin, where $0< R_1< R_2<\dots<R_t$; 
\item each level edge $vw\in E(G)$ with $v,w\in V_i$ is drawn as an open curve between $v$ and $w$ strictly outside of $C_i$; and 
\item each binding edge $vw$ with $v\in V_i$ and $w\in V_{i+1}$ is drawn either:
\begin{itemize}
\item as an open curve from $v$ to $w$ strictly between $C_i$ and $C_{i+1}$ (called a \emph{direct} edge), or 
\item as an open curve starting at $v$ that crosses $C_{i+1}$ once at a point distinct from $w$, then stays outside of $C_{i+1}$, and ends at $w$ (called a \emph{hooked} edge).  
\end{itemize}
\item each tree edge $vw\in E(T)$ is direct and binding..
\end{itemize}
\end{lemma}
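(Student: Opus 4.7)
The plan is to use the planarity of $G$ together with the BFS structure to determine a cyclic order on each layer, then to route edges according to their type, exploiting that every edge either stays within one layer or connects two consecutive layers.

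Fix any planar embedding of $G$ with $r$ on the outer face, and let $G_i := G[V_0 \cup \dots \cup V_i]$. Each $G_i$ is connected, since $T$ restricted to $V_0 \cup \dots \cup V_i$ is a connected spanning subgraph. From the embedding, extract a cyclic order $\sigma_i$ on $V_i$ by walking around the outer boundary of $G_i$ and recording the order in which vertices of $V_i$ are encountered; vertices of $V_i$ that happen to lie in bounded faces of $G_{i-1}$ can be inserted into $\sigma_i$ immediately adjacent to their tree parent's position. Place $r$ at the origin and place each $V_i$ on a circle $C_i$ of radius $R_i$ (with $0 < R_1 < \dots < R_t$) in the cyclic order $\sigma_i$.

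Draw each tree edge as a direct radial arc between consecutive circles, so every tree edge is direct and binding as required. For a non-tree level edge $uv$ with $u,v \in V_i$, draw it as an open curve strictly outside $C_i$, curving outward as much as needed to nest cleanly with other level edges on the same layer. For a non-tree binding edge $uv$ with $u \in V_i$ and $v \in V_{i+1}$, route it as a direct arc in the annulus between $C_i$ and $C_{i+1}$ whenever this is compatible with the tree edges; otherwise, route it as a hooked arc that exits $v$ outward, crosses $C_{i+1}$ once, then wraps back and terminates at $u$.

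The main obstacle is proving that this drawing is crossing-free. The key properties to establish are: (i) within layer $V_i$, the non-tree level edges form a non-crossing family when drawn outside $C_i$, because the planar embedding organises them as nested ``brackets'' along the outer boundary of $G_i$; (ii) direct binding edges do not cross tree edges, which depends on $\sigma_{i+1}$ refining the order on $V_i$ that is induced through the tree edges; and (iii) hooked edges can be inserted without conflicting with level edges or direct edges above $C_{i+1}$, which uses the fact that level edges on $V_{i+1}$ can always be pushed still further outward. All three properties rely on a careful analysis of the rotation system of the planar embedding, combined with the BFS property that no edge of $G$ skips a layer, which is precisely what makes the nested concentric structure compatible with planarity.
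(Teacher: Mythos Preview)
The paper does not prove this lemma; it is quoted from \citet{Pupyrev17} and used as a black box, so there is no argument in the paper to compare against. That said, your sketch has a genuine gap in the definition of $\sigma_i$.

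You propose to read off a cyclic order on $V_i$ by walking the outer boundary of $G_i = G[V_0 \cup \dots \cup V_i]$, and to patch in separately those vertices of $V_i$ that lie in bounded faces of $G_{i-1}$. But a vertex of $V_i$ can fail to lie on the outer face of $G_i$ even when it lies in the unbounded face of $G_{i-1}$: level edges inside $V_i$ may enclose it. Concretely, take $G$ to be the octahedron rooted at any vertex $r$. Then $V_1$ consists of the four neighbours of $r$, which induce a $4$-cycle, and $G_1$ is the wheel $W_4$ with hub $r$. In any planar embedding with $r$ on the outer face, that outer face is a triangle, so only two of the four vertices of $V_1$ lie on the outer boundary of $G_1$; the other two are interior. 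Since $G_0=\{r\}$ has no bounded faces, your patch does not apply, and your procedure simply never assigns those two vertices a position in $\sigma_1$.

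The standard way around this (and essentially what the cited construction does) is not to look at the outer boundary of $G_i$, but to build $\sigma_{i+1}$ from $\sigma_i$ via the rotation system: walk around each face of $G_i$ and, at each boundary vertex $v\in V_i$, list the tree-children of $v$ in the cyclic order in which the corresponding tree edges appear around $v$ in the embedding of $G$. This produces a well-defined cyclic order on all of $V_{i+1}$ in which tree edges are automatically non-crossing. Your items (i)--(iii) are then the right things to verify, but note that as written they are statements of what must hold rather than arguments; in particular (iii) requires you to specify \emph{which} non-tree binding edges become hooked and to check that their outside-$C_{i+1}$ portions can be threaded among the level edges of layer $i{+}1$ without crossings, which is where most of the remaining work lies.
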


\begin{figure}[!h]
\includegraphics{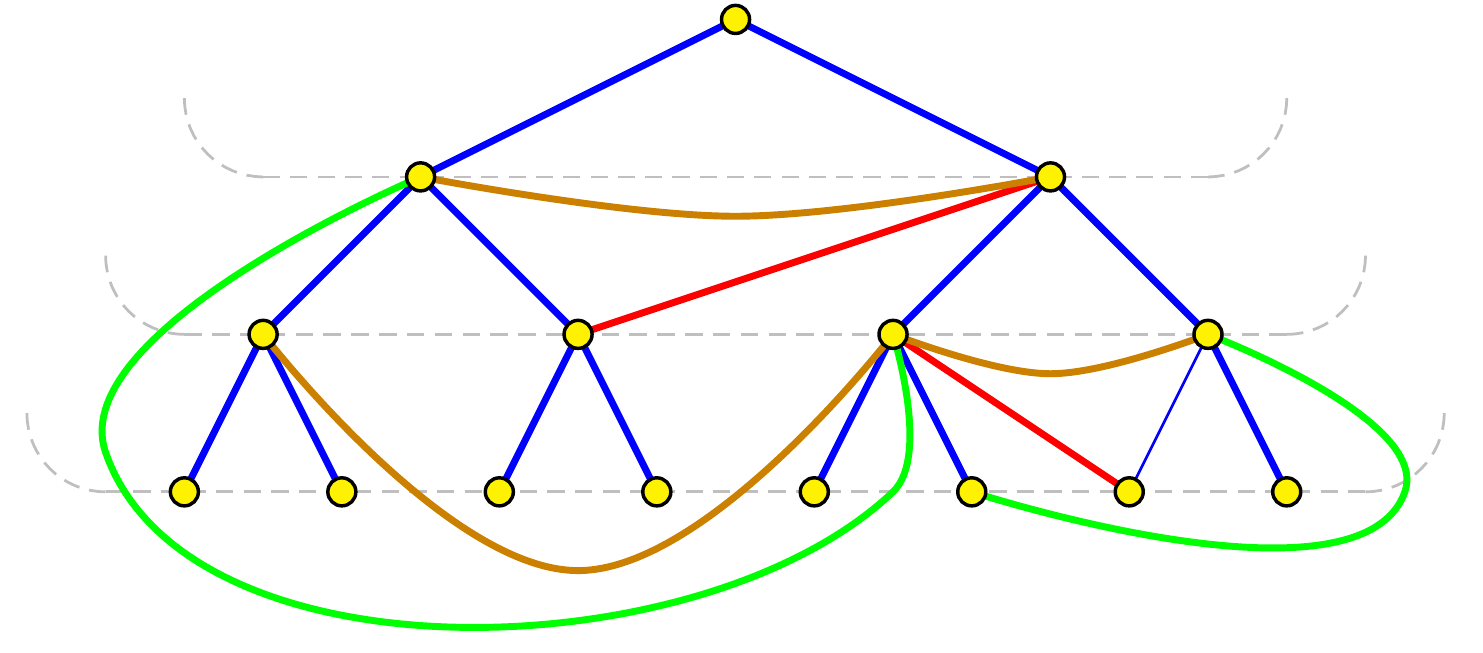}
\caption{Drawing of planar graph on concentric circles: tree edges are blue, hooked edges are green, level edges are brown, direct non-tree edges are red.\label{EdgeTypes}}
\end{figure}

\subsection{Well-Layered Planar Graphs}

A planar graph $G$ is \emph{well-layered} if there is a BFS spanning tree $T$ of $G$ rooted at a vertex $r$ such that every non-tree edge $vw\in E(G)\setminus E(T)$ is a level edge in the corresponding BFS layering, and both $v$ and $w$ are leaves in $T$ with degree 2 in $G$. This implies that the set of non-tree edges are a matching in $G$. 
 
\begin{lemma}
\label{WellLayered}
Let $G$ be well-layered planar graph with corresponding BFS spanning tree $T$ and BFS layering $(V_0,V_1,\dots,V_t)$ rooted at a vertex $r$. Assume that every vertex in $G$ has at most $\Delta$ children in $T$. Then for $i\in\{1,2,\dots,t\}$, there is a partition $\{V_{i,a}: a\geq 0\}$ of $V_i$, and an ordering $\overrightarrow{V_{i,a}}$ of each set $V_{i,a}$, such that:
\begin{enumerate}[(a)]
\item\label{WellLayered-NonTreeSpan} 
for each non-tree edge $vw\in E(G)\setminus E(T)$, both $v$ and $w$ are in $V_{i,a}$ for some $i,a\geq 0$,
\item\label{WellLayered-TreeSpan} 
for each tree edge $vw\in E(T)$, if $v\in V_{i,a}$ and $w\in V_{i+1,b}$ for some $a,b\geq 0$, then $b-\Delta a \in\{0,1,\dots,2\Delta-1\}$,
\item\label{WellLayered-InBag} 
for all $i,a\geq 0$, no two edges in $G[V_{i,a}]$ cross or nest with respect to the ordering $\overrightarrow{V_{i,a}}$, in particular, $\overrightarrow{V_{i,a}}$ defines a $1$-queue layout of $G[V_{i,a}]$, and
\item\label{WellLayered-InterBag} 
for all $i,a,b\geq 0$, the ordering $\overrightarrow{V_{i,a}}\overrightarrow{V_{i+1,b}}$ defines a 1-queue layout of $G[V_{i,a},V_{i+1,b}]$. 
\end{enumerate}
\end{lemma}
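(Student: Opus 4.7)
My plan is to define the partition and orderings by induction on the level $i$, using the planar drawing from \cref{concentric} as the guiding structure. Apply \cref{concentric} to obtain a drawing with $V_i$ on a circle $C_i$. Since $G$ is well-layered, every non-tree edge is a level edge between two leaves of $T$ of degree $2$ in $G$, so the non-tree edges at level $i$ form a non-crossing matching drawn outside $C_i$, while tree edges are direct binding edges between consecutive circles. Each ordering $\overrightarrow{V_{i,a}}$ will be the restriction of a fixed linearization of the cyclic order on $C_i$, and each part $V_{i,a}$ will be an interval of that linearization.

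The base case $i=0$ sets $V_{0,0} = \{r\}$. For the inductive step, given the partition at level $i$, I would process each $V_{i,a}$ by enumerating the children in $T$ of its vertices in the order induced by the planar embedding, and distributing them among the $2\Delta$ candidate destination parts $V_{i+1,\Delta a}, V_{i+1,\Delta a+1}, \dots, V_{i+1,\Delta a+2\Delta-1}$. The default scheme places the $j$-th child of each vertex $v \in V_{i,a}$ into destination index $\Delta a + j - 1$, so that different children of the same parent occupy distinct destination parts, giving~(b), and children of consecutive parents end up in consistent relative positions. Condition~(d) is then immediate: within $V_{i,a}$ the siblings appear consecutively and in planar order, so the tree edges from $V_{i,a}$ to a common destination part are non-nesting. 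Condition~(c) holds provided that within each destination part the only edges (which are non-tree matching edges) are placed as disjoint consecutive pairs.

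The delicate step, and the main obstacle, is condition~(a): each non-tree edge $cc'$ at level $i+1$ must land with both endpoints in a single destination part. Achieving this requires exploiting the overlap between the candidate ranges of adjacent parts at level $i$, namely
\[
\{\Delta a, \dots, \Delta a + 2\Delta - 1\} \cap \{\Delta(a+1), \dots, \Delta(a+1) + 2\Delta - 1\} = \{\Delta a + \Delta, \dots, \Delta a + 2\Delta - 1\}.
\]
The key geometric fact to verify, using the planar drawing and the well-layered structure, is that if a non-tree edge $cc'$ at level $i+1$ connects children of parents $v, v'$ lying in different parts $V_{i,a}, V_{i,a'}$, then $|a-a'|\leq 1$, and moreover $v, v'$ are sufficiently close on $C_i$ that the shared destination indices suffice to place $c$ and $c'$ together as a consecutive pair inside the appropriate $V_{i+1,b}$. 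This will rely on the fact that the non-tree matchings at successive levels are jointly non-crossing and that matched vertices are leaves of $T$ with degree $2$ in $G$, which rigidly constrains how matched pairs can be threaded through the concentric drawing. Establishing this geometric fact cleanly, and then showing that the perturbations to the default assignment required to honour all non-tree edges at level $i+1$ can be made simultaneously without conflict, is the core technical work of the lemma.
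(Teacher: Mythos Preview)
Your approach differs fundamentally from the paper's. The paper does \emph{not} build the partition inductively level by level. It gives a single global definition: form a multigraph $G^+$ by replacing each non-tree edge at height $\ell$ (that is, $\ell$ layers above the deepest layer) by $\Delta^{\ell}$ parallel copies; take the co-tree $T^*$ in the planar dual of $G^+$ (the spanning tree consisting of the edges dual to non-tree edges), rooted at the outer face; for each vertex $v$ set $m(v)$ to be the minimum $T^*$-distance from the root to any face incident with the subtree $T_v$; and finally put $g(v)=\lfloor m(v)/\Delta^{\ell(v)}\rfloor$ and $V_{i,a}=\{v\in V_i:g(v)=a\}$. Properties (a)--(d) are then checked by short calculations with $m$ and $g$; the $\Delta^\ell$ multiplicities are precisely what makes the range in (b) come out as $\{0,\dots,2\Delta-1\}$ and what drives the nesting argument in~(c).

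Your inductive scheme has a real gap exactly where you say ``this is the core technical work.'' The claim that parents $v,v'$ of a matched pair at level $i{+}1$ lie in parts with $|a-a'|\le 1$ cannot be ``verified'' from planarity and the well-layered hypothesis alone: it is a statement about the level-$i$ partition, which in your framework was fixed at the previous inductive step without any reference to the non-tree matching at level $i{+}1$. Absent a much stronger invariant---one that in effect already encodes the global co-tree distances---nothing prevents a matched pair $c,c'$ at level $i{+}1$ from having parents in, say, $V_{i,0}$ and $V_{i,5}$, whose destination ranges $\{0,\dots,2\Delta-1\}$ and $\{5\Delta,\dots,7\Delta-1\}$ are disjoint, making (a) unattainable. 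There is also a smaller inconsistency: your default rule sends the $j$-th child of every $v\in V_{i,a}$ to index $\Delta a+j-1$, which interleaves children of different parents and therefore does not produce interval parts, contrary to your earlier assertion.
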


\begin{proof}
Apply \cref{concentric} to obtain a drawing of $G$ on concentric circles $C_1,C_2,\dots,C_t$. 
For each vertex $v\in V(G)$, let $\ell(v):=\dist_G(v,r)-t$. Thus $\ell(v)\in\{0,1,\dots,t\}$ for every vertex $v$ of $G$, and $\ell(r)=t$. For each vertex $v$ of $G$, let $T_v$ be the subtree of $T$ rooted at $v$. 

For each non-tree edge $vw\in E(G)\setminus E(T)$, let $D_{vw}$ be the cycle obtained from the $vw$-path in $T$ by adding the edge $vw$. Note that if $v,w\in V_i$ then the $vw$-path in $T$ is drawn within the interior of $C_i$ (since every tree edge is binding and direct) and $vw$ is drawn outside of $C_i$. 

Let $G^+$ be the multigraph with vertex set $V(G)$, where each tree edge $vw\in E(T)$ has multiplicity 1 in $G^+$, and each non-tree edge $vw\in E(G)\setminus E(T)$ has multiplicity $\Delta^{\ell(v)}$ (which equals $\Delta^{\ell(w)}$ since every non-tree edge is a level edge). Note that $T$ is a spanning tree of $G^+$. 

A key property of this construction is that for each vertex $v$ of $G$, the number of non-tree edges in $G^+$ with one endpoint in $T_v$ is at most $\Delta^{\ell(v)}$. We prove this claim by induction on $\ell(v)$. First note that if $v$ is a leaf in $T$, then $T_v$ is simply the vertex $v$, and $v$ is incident to at most one non-tree edge in $G$, and thus is incident to at most $\Delta^{\ell(v)}$ edges in $G^+$. So the claim holds for leaves. In particular, if $\ell(v)=0$ then $v$ is a leaf in $T$, and the claim holds. Now consider a non-leaf vertex $v$ with $\ell(v)\geq1$. Let $v_1,\dots,v_d$ be the children of $v$, where $v_1,\dots,v_p$ are not leaves of $T$, and $v_{p+1},\dots,v_d$ are leaves of $T$. Note that $\ell(v_i)=\ell(v)-1$ for $i\in\{1,\dots,d\}$. By induction, for each $i\in\{1,\dots,p\}$, the number of non-tree edges in $G^+$ with one endpoint in $T_{v_i}$ is at most $\Delta^{\ell(v_i)}=\Delta^{\ell(v)-1}$. For $i\in\{p+1,\dots,d\}$, we have already shown that the number of non-tree edges in $G^+$ incident to $v_i$ is at most $\Delta^{\ell(v_i)}=\Delta^{\ell(v)-1}$ edges. In total, there are at most $d\Delta^{\ell(v)-1}\leq\Delta^{\ell(v)}$ non-tree edges in $G^+$ incident to $T_v$, as claimed. 

Let $G^*$ be the dual of $G^+$. Let $T^*$ be the spanning subgraph of $G^*$ consisting of those edges of $G^*$ dual to edges of $E(G^+)\setminus E(T)$. It is well known (and easily follows from Euler's formula) that $T^*$ is a spanning tree of $G^*$. ($T^*$ is sometimes called a \emph{co-tree}; note that $T$ and $T^*$ can be simultaneously drawn without crossing each other.)\ Let $r^*$ be the vertex of $T^*$ dual to the outer-face of $G^+$. Consider $T^*$ to be rooted at $r^*$. 

For each face $f$ of $G^+$, let $d(f)$ be the distance in $T^*$ between $r^*$ and the vertex of $T^*$ dual to $f$. For each vertex $v$ of $G^+$, let $m(v)$ be the minimum of $d(f)$ taken over all faces $f$ of $G^+$ incident with the subtree of $T$ rooted at $v$, and let $$g(v):=\floor*{\frac{m(v)}{\Delta^{\ell(v)}}}.$$ For $i,a\geq 0$, let $$V_{i,a}:=\{v\in V_i:g(v)=a\}.$$ 
Let $\overrightarrow{V_{i,a}}$ be the ordering of $V_{i,a}$ along circle $C_i$, where the outer-face defines the start and end point. (In the language of \citet{BFGMMRU18}, $m(v)$ is analogous to the `matching-value' of $v$, and $g(v)$ is the `layer-group' of $v$.) 

This concludes the description of the partition $\{V_{i,a}:i,a\geq 0\}$ and the orderings $\overrightarrow{V_{i,a}}$. We now show these satisfy the claims of the lemma. 

We now prove (\ref{WellLayered-NonTreeSpan}). Consider a non-tree edge $vw\in E(G)\setminus E(T)$. By assumption, both $v$ and $w$ are in $V_{i}$ for some $i\geq 0$, and both $v$ and $w$ are leaves in $T$. Thus $\ell(v)=\ell(w)$, and $v$ is the only vertex in the subtree rooted at $v$, and similarly for $w$. Since $\deg_G(v)=\deg_G(w)=2$, the faces incident to $v$ are exactly the same faces incident to $w$. Thus $m(v)=m(w)$, implying $v$ and $w$ are in $V_{i,a}$ where $a=\floor{m(v)/\Delta^{\ell(v)}}$. This proves (\ref{WellLayered-NonTreeSpan}). 


We now prove (\ref{WellLayered-TreeSpan}). Consider a non-leaf vertex $v$ with $\ell(v)=\ell$. Then all the edges incident to $v$ are in $T$. Let $x$ and $y$ be two children of $v$ consecutive in the embedding of $G$. Observe that $m(y)-m(x)$ is maximised when all the non-tree edges incident to $T_x$ go `under' $T_y$. The number of such edges is at most $\Delta^{\ell(x)}=\Delta^{\ell-1}$. Thus $m(y) \leq m(x) +\Delta^{\ell-1}$. Since $v$ has at most $\Delta$ children, $m(y) \leq m(x) + \Delta^{\ell}$ for all children $x$ and $y$ of $v$. Every face incident with $T_v$ is incident to $T_x$ for some child $x$ of $v$. Thus $m(v)$ equals the minimum of $m(x)$ taken over all children $x$ of $v$. Hence $m(y)\leq m(v) +  \Delta^{\ell}$ for all children $y$ of $v$, implying
$$g(y) = \floor*{\frac{m(y)}{\Delta^{\ell-1}}} \leq \frac{m(y)}{\Delta^{\ell-1}} 
\leq \Delta \frac{m(v)}{\Delta^{\ell}} +  \Delta 
< \Delta \left( \floor*{\frac{m(v)}{\Delta^{\ell}}} +1 \right) +  \Delta 
= \Delta\, g(v) +  2\Delta.$$ 
Since $g(v)$ and $g(y)$ are integers, $g(y) \leq \Delta\, g(v) + 2\Delta -1$. Moreover, $m(v) \leq m(y)$, implying
$$\Delta\, g(v) = \Delta \floor*{\frac{m(v)}{\Delta^{\ell}}}
\leq \Delta \,\frac{m(v)}{\Delta^{\ell}} \leq \frac{m(y)}{\Delta^{\ell-1}} < 
\floor*{\frac{m(y)}{\Delta^{\ell-1}}}+1 = g(y)+1.$$
Since $g(v)$ and $g(y)$ are integers, $g(y) \geq \Delta\, g(v)$. 
Summarising, if $y$ is a child of $v$ in $T$ then 
$$g(y) - \Delta\, g(v) \in\{0,1,\dots,2\Delta-1\}.$$
This says that for each tree edge $vw\in E(T)$ where $v\in V_{i,a}$ and $w\in V_{i+1,b}$, we have $b-\Delta a \in\{0,1,\dots,2\Delta-1\}$, which proves (\ref{WellLayered-TreeSpan}). 

We now prove (\ref{WellLayered-InBag}), which claims that no two edges in $G[V_{i,a}]$ cross or nest with respect to the ordering $\overrightarrow{V_{i,a}}$. 
Consider edges $vw,pq\in E(G)$ with $v,w,p,q\in V_{i,a}$. 
Let $\ell:=\ell(v)=\ell(w)$. 
Neither $vw$ nor $pq$ are tree edges. 
Suppose on the contrary that $vw$ and $pq$ cross with respect to $\overrightarrow{V_{i,a}}$. 
Without loss of generality, $v \prec p \prec w \prec q$ in $\overrightarrow{V_{i,a}}$. 
Then $v,p,w,q$ appear in this order on the circle $C_i$. 
Since $vw$ and $pq$ are drawn outside $C_i$, 
these edges cross in the drawing of $G$, which is a contradiction. 
Thus no two edges in $G[V_{i,a}]$ cross with respect to $\overrightarrow{V_{i,a}}$.
Now suppose that $vw$ and $pq$ nest with respect to $\overrightarrow{V_{i,a}}$.
Without loss of generality, $v \prec p \prec q \prec w$ in $\overrightarrow{V_{i,a}}$. 
Thus $v,p,q,w$ appear in this order on $C_i$. 
Hence both $T_p$ and $T_q$ are inside $D_{vw}$ and the outer-face of $G^+$ is outside $D_{vw}$. Since $vw$ is the only edge of $D_{vw}$ not in $T$, every path in $T^*$ from $r^*$ to a vertex dual to a face incident with $p$ or $q$ must include the edges of $T^*$ dual to $vw$. Let $f$ be the face of $G$ immediately below $vw$. Since $vw$ has multiplicity $\Delta^{\ell}$ in $G^+$, for every face $f'$ incident with $T_p$ or $T_q$, we have $d(f')\geq d(f)+\Delta^{\ell}$. Since $w$ is incident with $f$, we have $m(w)\leq d(f)$. 
Thus $m(p) \geq d(f) + \Delta^\ell \geq m(w) + \Delta^\ell$. Hence
$$\frac{ m(p) }{ \Delta^\ell } \geq \frac{ m(w) }{\Delta^\ell} + 1.$$
Therefore 
$$g(p) = \floor*{ \frac{ m(p) }{ \Delta^\ell } } > \floor*{ \frac{ m(w) }{\Delta^\ell} } = g(w),$$
which implies that $p$ and $w$ are not both in $V_{i,a}$. 
This contradiction shows that $\overrightarrow{V_{i,a}}$ defines a 1-queue layout of $G[V_{i,a}]$ for all $i,a\geq 0$. This proves (\ref{WellLayered-InBag}). 

We now prove (\ref{WellLayered-InterBag}). 
Suppose on the contrary that $v \prec  x$ in $\overrightarrow{V_{i,a}}$ and $y \prec  w$ in $\overrightarrow{V_{i+1,b}}$ for some edges $vw,xy\in E(G)$ for some $i,a,b\geq 0$. Thus $v$ is to the left of $x$ in $C_i$ and $y$ is to the left of $w$ in $C_{i+1}$. Since every non-tree edge is a level edge, both $vw$ and $xy$ are tree edges, which are drawn direct between $C_i$ and $C_{i+1}$. Thus $vw$ and $xy$ cross. 
This contradiction shows that no two edges of $G[V_{i,a},V_{j,b}]$ are nested in the ordering $\overrightarrow{V_{i,a}}\overrightarrow{V_{i+1,b}}$, which thus defines a 1-queue layout of $G[V_{i,a},V_{i+1,b}]$. This proves (\ref{WellLayered-InterBag}). 
\end{proof}

Note that \cref{WellLayered} implies that every well-layered graph has a $2\Delta$-queue layout, as proved by \citet{BFGMMRU18}. To see this, let $\overrightarrow{V_i}$ be the ordering $\overrightarrow{V_{i,0}}\overrightarrow{V_{i,1}}\dots$ of $V_i$. Then take the ordering $\overrightarrow{V_0}\overrightarrow{V_1}\overrightarrow{V_2}\ldots$ of $V(G)$. By \cref{WellLayered}(\ref{WellLayered-InBag}), every level edge can be assigned to a single queue $Q^*$. Assign each tree edge $vw\in E(T)$ where $v\in V_{i,a}$ and $w\in V_{i+1,b}$ to $Q_{b-\Delta a}$. By \cref{WellLayered}(\ref{WellLayered-TreeSpan}) this introduces $2\Delta$ queues. Suppose that tree edges $vw$ and $pq$ in $Q_j$ are nested for some $j\in\{0,1,\dots,2\Delta-1\}$, with $v \prec p \prec q \prec w$ in the ordering. Then $v\in V_{i,a}$, $p\in V_{i,b}$, $q\in V_{i+1,c}$ and $w\in V_{i+1,d}$ for some $i,a,b,c,d\geq 0$ with $j=d-\Delta a = c- \Delta b$. Thus $d-c=\Delta(a-b)$. Since $v \prec p \prec q \prec w$ in the ordering, $d-c\geq 0$ and $a-b\leq 0$. Thus $a=b$ and $c=d$, which contradicts \cref{WellLayered}(\ref{WellLayered-InterBag}). 

\subsection{General Planar Graphs}

We now extend \cref{WellLayered} for all planar graphs.

\begin{lemma}
\label{Structure}
Let $G$ be a planar graph with a BFS spanning tree $T$ and BFS layering $(V_0,V_1,\dots,V_t)$ rooted at a vertex $r$. Assume that every vertex in $G$ has degree at most $\Delta+1$ and has most $\Delta$ children in $T$. 
Then for $i\in\{1,2,\dots,t\}$, there is a partition $\{V_{i,a}: a\geq 0\}$ of $V_i$, and an ordering $\overrightarrow{V_{i,a}}$ of each set $V_{i,a}$, such that: 
\begin{enumerate}[(a)]
\item\label{Structure-LevelEdgeSpan}
 for each level edge $vw\in E(G)$, if $v\in V_{i,a}$ and $w\in V_{i,b}$ then $|a-b|\leq 1$; 
\item\label{Structure-TreeEdgeSpan} 
for each tree edge $vw\in E(T)$, if $v\in V_{i,a}$ and $w\in V_{i+1,b}$ then $b-a\Delta\in\{0,1,\dots,2\Delta-1\}$;
\item\label{Structure-NonTreeBinding} 
 for each non-tree binding edge $vw\in E(G)\setminus E(T)$, if $v\in V_{i,a}$ and $w\in V_{i+1,b}$ then $b-a\Delta\in\{-1,0,\dots,2\Delta\}$;
\item\label{Structure-AdjacentBags} 
the ordering $\overrightarrow{V_{i,a}}\overrightarrow{V_{i,a+1}}$ defines a $1$-queue layout of  $G[V_{i,a},V_{i,a+1}]$ for all $i,a\geq 0$.
\item\label{Structure-InterBag}  
the ordering $\overrightarrow{V_{i,a}}\overrightarrow{V_{i+1,b}}$ defines a $(6\Delta+1)$-queue layout of  $G[V_{i,a},V_{i+1,b}]$ for all $i,a,b\geq 0$.
\item\label{Structure-InBag}
the ordering $\overrightarrow{V_{i,a}}$ defines a $2\Delta$-queue layout of $G[V_{i,a}]$ for all $i,a\geq 0$.
\end{enumerate}
\end{lemma}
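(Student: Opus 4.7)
The plan is to reduce \cref{Structure} to \cref{WellLayered}. Following the construction of \citet{BFGMMRU18}, I first build a well-layered planar graph $G_1$ from $G$ by subdividing each non-tree binding edge of $G$ at most three times and each level edge twice; tree edges of $G$ are untouched. The subdivision vertices are placed on appropriate BFS layers, extending the layering $(V_i)$ of $G$ to a layering $(V'_j)$ of $G_1$ and the tree $T$ to a BFS spanning tree $T_1$ of $G_1$ rooted at $r$, so that every non-tree edge of $G_1$ is a level edge between two degree-$2$ leaves of $T_1$. For instance, a non-tree binding edge $vw$ with $v\in V_i$ and $w\in V_{i+1}$ is replaced by a path $v u_1 u_2 u_3 w$ with $u_1\in V'_{i+1}$ and $u_2,u_3\in V'_{i+2}$, the middle edge $u_2u_3$ being the unique non-tree edge in the path. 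Since subdivision cannot shorten paths, $V'_i\cap V(G)=V_i$; and because each vertex of $G$ has at most $\Delta$ children in $T$ while each subdivision vertex has at most one child in $T_1$, the hypotheses of \cref{WellLayered} hold for $G_1$.

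Applying \cref{WellLayered} to $G_1$ yields a partition $\{V'_{j,a}\}$ of each BFS level of $G_1$ with orderings $\overrightarrow{V'_{j,a}}$. I then set $V_{i,a}:=V'_{i,a}\cap V_i$ and let $\overrightarrow{V_{i,a}}$ be the restriction of $\overrightarrow{V'_{i,a}}$ to $V(G)$. The span claims~(\ref{Structure-LevelEdgeSpan})--(\ref{Structure-NonTreeBinding}) then follow by chaining the constraints of \cref{WellLayered}(\ref{WellLayered-NonTreeSpan}) and \cref{WellLayered}(\ref{WellLayered-TreeSpan}) along the subdivided path $P_{vw}\subseteq G_1$ of each edge $vw\in E(G)$. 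Since $P_{vw}$ has length at most four and consists of tree edges of $G_1$ together with a single non-tree level edge whose endpoints lie in a common bag, the indices of the bags $V'_{j,c}$ traversed by $P_{vw}$ are linked by a bounded chain of inequalities of the form $c'\in\Delta c+\{0,\dots,2\Delta-1\}$. Composing these yields $|a-b|\le 1$ for level edges, $b-\Delta a\in\{0,\dots,2\Delta-1\}$ for (unsubdivided) tree edges, and $b-\Delta a\in\{-1,\dots,2\Delta\}$ for non-tree binding edges.

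The queue-layout claims~(\ref{Structure-AdjacentBags})--(\ref{Structure-InBag}) follow from the same lifting principle: if two edges $vw,xy\in E(G)$ nest in one of the prescribed orderings, then the middle non-tree level edges of $P_{vw}$ and $P_{xy}$ either nest inside a single bag of $V'$ (contradicting \cref{WellLayered}(\ref{WellLayered-InBag})) or nest between two bags at adjacent levels (contradicting \cref{WellLayered}(\ref{WellLayered-InterBag})). Claim~(\ref{Structure-AdjacentBags}) then follows because, by~(\ref{Structure-LevelEdgeSpan}), only level edges of $G$ cross between $V_{i,a}$ and $V_{i,a+1}$, and the middle edges of their subdivided paths all live in a common bag of $V'_{i+1}$. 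Claims~(\ref{Structure-InterBag}) and~(\ref{Structure-InBag}) are obtained by distributing the edges of $G[V_{i,a},V_{i+1,b}]$ and $G[V_{i,a}]$ respectively among a bounded number of queues, indexed jointly by edge type and by the bag indices in $V'$ of the middle vertices of the subdivided paths; each such queue inherits non-nesting from \cref{WellLayered}(\ref{WellLayered-InBag}) or~(\ref{WellLayered-InterBag}).

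The main obstacle I anticipate is the bookkeeping in claim~(\ref{Structure-InterBag}): selecting the right joint index separating tree edges from non-tree binding edges of $G[V_{i,a},V_{i+1,b}]$ so that exactly $6\Delta+1$ queues suffice. This entails carefully enumerating, for each edge type, the possible bag indices of the middle level edges of the subdivided paths in $V'$, and then verifying that two edges landing in the same class cannot nest. Once the classification is fixed, the non-nesting check within each class is mechanical but relies on a careful reading of the concentric drawing of $G_1$ given by \cref{concentric}.
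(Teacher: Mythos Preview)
Your reduction to \cref{WellLayered} via the subdivision $G_1$ is exactly the paper's construction, and your treatment of (\ref{Structure-LevelEdgeSpan})--(\ref{Structure-NonTreeBinding}) and (\ref{Structure-InBag}) by chaining the span constraints of \cref{WellLayered}(\ref{WellLayered-NonTreeSpan}),(\ref{WellLayered-TreeSpan}) along the subdivided paths is correct and matches the paper essentially verbatim.

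There is, however, a genuine gap in your argument for~(\ref{Structure-AdjacentBags}). Your claim that ``the middle edges of their subdivided paths all live in a common bag of $V'_{i+1}$'' is false: for a level edge $vw$ with $v\in V_{i,a}$ and $w\in V_{i,a+1}$, the middle edge $xy$ lies in $V'_{i+1,c}$ with $c-\Delta a\in\{0,\dots,2\Delta-1\}$ and $c-\Delta(a{+}1)\in\{0,\dots,2\Delta-1\}$, so $c$ ranges over $\{\Delta a+\Delta,\dots,\Delta a+2\Delta-1\}$, which is $\Delta$ possible bags, not one. Your lifting principle would then only yield a $\Delta$-queue layout of $G[V_{i,a},V_{i,a+1}]$, not a $1$-queue layout. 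The paper does \emph{not} prove~(\ref{Structure-AdjacentBags}) as a black-box consequence of \cref{WellLayered}: it reopens the proof of \cref{WellLayered}, uses the concentric drawing of $G$ from \cref{concentric}, and argues directly with the internal quantities $m(\cdot)$ and $g(\cdot)$ to rule out nesting by a planarity/monotonicity argument on the circle $C_i$.

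A similar issue lurks in~(\ref{Structure-InterBag}), which you flagged as the hard part. The paper does not index solely by ``edge type and bag indices of middle vertices'': it first separates binding edges into \emph{direct} and \emph{hooked} using the concentric drawing of $G$, handles all direct edges (tree edges included, which are unsubdivided and hence have no middle edge at all) with a single queue via a planarity argument, and only then applies your lifting idea to hooked edges, indexed by $(\eta,\gamma)$ with $\eta=c-b\in\{-1,0,1\}$ and $\gamma\in\{0,\dots,2\Delta-1\}$, to get the remaining $6\Delta$ queues. So to hit the stated bounds in~(\ref{Structure-AdjacentBags}) and~(\ref{Structure-InterBag}) you will need to invoke the concentric drawing of $G$ directly, not just the output of \cref{WellLayered}.
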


\begin{proof}
Apply \cref{concentric} to obtain a drawing of $G$ on concentric circles $C_1,C_2,\dots,C_t$ rooted at $r$. 

Let $G'$ be obtained by subdividing edges of $G$ as follows, as illustrated in \cref{Subdivide}. Initialise $T':=T$ and $V'_i:=V_i$ for each $i\geq 0$. For each level edge $vw\in E(G)$ with $v,w\in V_i$ for some $i\geq 0$: 
\begin{itemize}
\item replace $vw$ by a path $vxyw$ in $G'$ (where $x$ and $y$ are new vertices); 
\item add the edges $vx$ and $wy$ to $T'$ (so $x$ and $y$ are leaves in $T'$ with degree 2 in $G'$); and 
\item add $x$ and $y$ to $V'_{i+1}$. 
\end{itemize}
For each non-tree binding edge $vw\in E(G)\setminus E(T)$ with $v\in V_i$ and $w\in V_{i+1}$ for some $i\geq 0$: 
\begin{itemize}
\item replace $vw$ by a path $vxyzw$ in $G'$ (where $x,y,z$ are new vertices); 
\item add the edges $vx$, $xy$ and $wz$ to $T'$ (so $y$ and $z$ are leaves in $T'$ with degree 2 in $G'$); and 
\item add $x$ to $V'_{i+1}$, and add $y$ and $z$ to $V'_{i+2}$. 
\end{itemize}
Observe that $T'$ is a bfs spanning tree of $G'$, and $G'$ is well-layered with respect to the layering $V'_0,V'_1,\dots,V'_t$. For $i\geq 0$, let $\{V'_{i,a}: a\geq 0\}$ be the partition of $V'_i$ from \cref{WellLayered} applied to $G'$. Let $V_{i,a}:=V'_{i,a}\cap V(G)$ for $i,a\geq 0$, where $\overrightarrow{V_{i,a}}$ inherits its order from $\overrightarrow{V'_{i,a}}$. 

\begin{figure}
\centering\includegraphics{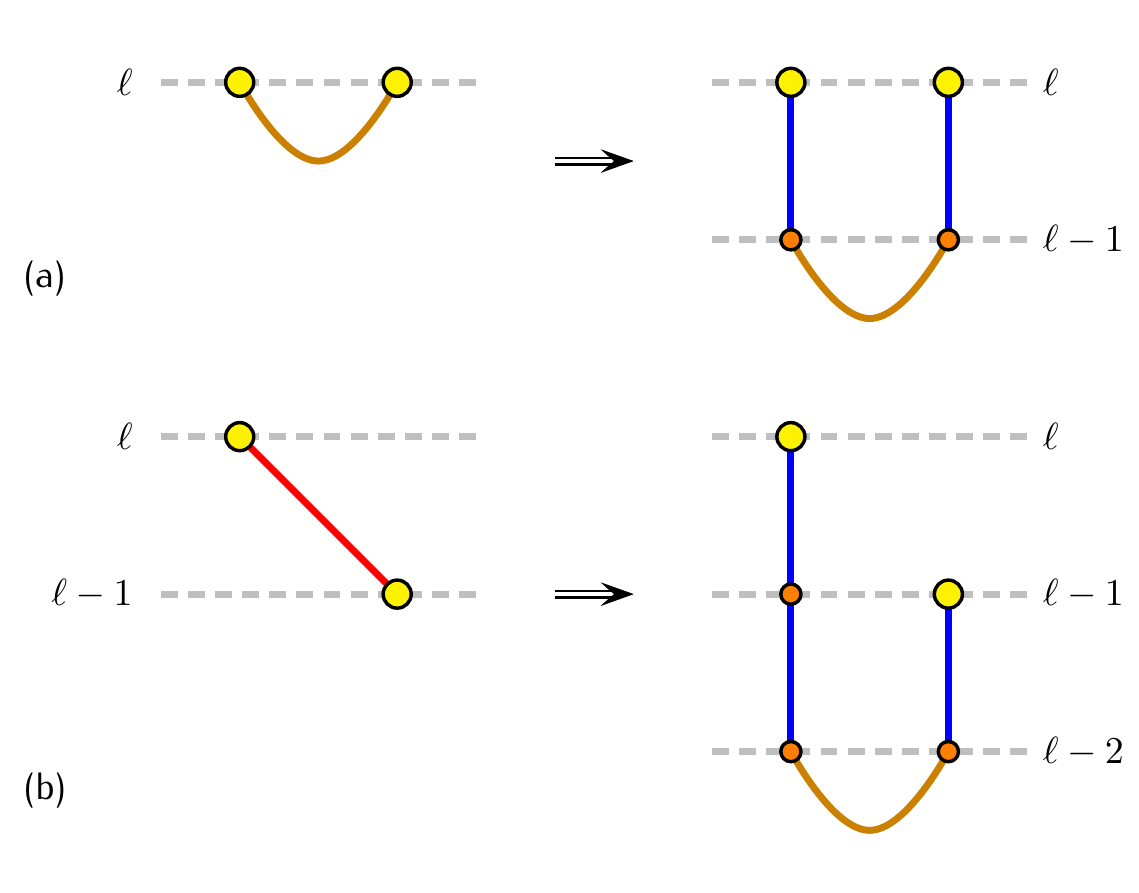}
\caption{\label{Subdivide} Creating the subdivision $G'$:  (a) level edge, (b) non-tree binding edge.}
\end{figure}

We now prove (\ref{Structure-LevelEdgeSpan}). Consider a level edge $vw\in E(G)$ with $v\in V_{i,a}$ and $w\in V_{i,b}$. 
Let $vxyw$ be the corresponding path in $G'$. Then $xy$ is a level non-tree edge of $G'$ with $v,w\in V'_{i+1}$. 
By \cref{WellLayered}(\ref{WellLayered-NonTreeSpan}), both $x$ and $y$ are in $V'_{i+1,c}$ for some $c\geq 0$.
Since $vx$ and $wy$ are tree edges in $G'$
by \cref{WellLayered}(\ref{WellLayered-TreeSpan}), we have 
$c-\Delta a = \alpha$ and $c-\Delta b = \beta$ for some $\alpha,\beta\in\{0,1,\dots,2\Delta-1\}$.
Thus $c = \alpha+\Delta a= \beta+\Delta b$, implying 
$\Delta( a-b) =\beta-\alpha \leq 2\Delta-1$ and $ a-b \leq 1$. 
Similarly $b-a \leq 1$. Thus $|a-b|\leq 1$. This proves (\ref{Structure-LevelEdgeSpan}). 

Property (\ref{Structure-TreeEdgeSpan}) 
follows immediately from \cref{WellLayered}(\ref{WellLayered-TreeSpan}) 
since a tree edge $vw\in E(T)$ with $v\in V_{i,a}$ and $w\in V_{i+1,b}$
is a tree edge in $G'$ with $v\in V'_{i,a}$ and $w\in V'_{i+1,b}$, in which case  $b-\Delta a \in\{0,1,\dots,2\Delta-1\}$.

We now prove (\ref{Structure-NonTreeBinding}). 
Consider a binding non-tree edge $vw\in E(G)\setminus E(T)$ with $v\in V_{i,a}$ and $w\in V_{i+1,b}$.
Let $vxyzw$ be the corresponding path in $G'$.
Then $vx$, $xy$ and $wz$ are tree edges in $G'$, and $yz$ is a level edge in $G'$. 
Moreover,  $x\in V'_{i+1}$ and $y,z\in V'_{i+2}$. 
Then $x\in V'_{i+1,c}$ for some $c\geq 0$, 
and $y,z\in V'_{i+2,d}$ for some $d\geq 0$ by \cref{WellLayered}(\ref{WellLayered-NonTreeSpan}).
Since $vx$, $xy$ and $wz$ are tree edges, by \cref{WellLayered}(\ref{WellLayered-TreeSpan}), 
$c-\Delta a =\alpha$ and $d-\Delta c =\beta$ and $d-\Delta b =\gamma$ for some $\alpha,\beta,\gamma\in\{0,1,\dots,2\Delta-1\}$. 
Thus $d = \beta + \Delta c = \gamma + \Delta b$, implying
$\Delta (c-b) = \gamma-\beta \leq 2\Delta-1$ and $c-b \leq 1$. 
Similarly, 
$\Delta (b-c) = \beta-\gamma \leq 2\Delta-1$ and $b-c \leq 1$. 
Now
$b  \leq c +1 =  \alpha + a \Delta+1$, implying
$b - a \Delta  \leq \alpha +1 \leq 2\Delta$. 
Similarly, 
$\Delta (c-b) = \gamma-\beta \leq 2\Delta-1$ and $c-b \leq 1$. 
Thus
$b\geq c-1 = \Delta a + \alpha - 1 \geq \Delta a-1$, 
implying $b - \Delta a \geq -1$. 
In summary, $b - a\Delta \in\{-1,0,\dots,2\Delta\}$. 
This proves (\ref{Structure-NonTreeBinding}). 

We now prove (\ref{Structure-AdjacentBags}). 
Suppose on the contrary that edges $vw$ and $pq$ in $G[V_{i,a},V_{i,a+1}]$ are nested in 
$\overrightarrow{V_{i,a}}\overrightarrow{V_{i,a+1}}$.
Without loss of generality, $v \prec p $ in $\overrightarrow{V_{i,a}}$
and $q \prec w$ in $\overrightarrow{V_{i,a+1}}$.
Thus $v \prec p$ and $q\prec w$ in $C_i$. 
If $v \prec p \prec w$ in $C_i$, then $m(p)\geq m(w)$, which contradicts the fact that $g(w) > g(p)$. 
Thus $v \prec w \prec p$.
If $v \prec q \prec w$ in $C_i$, then $vw$ crosses $pq$ in the drawing of $G$.
Thus $q \prec v \prec w \prec p$ in $C_i$.
Since $pq\in E(G)$, we have $m(v) \geq m(q)$, which contradicts the fact that $g(q) > g(v)$. 
Therefore the ordering $\overrightarrow{V_{i,a}}\overrightarrow{V_{i,a+1}}$ defines a $1$-queue layout of 
$G[V_{i,a},V_{i,a+1}]$.
This proves (\ref{Structure-AdjacentBags}).

We now prove (\ref{Structure-InterBag}). That is, for $i,a,b\geq 0$, we show that the ordering $\overrightarrow{V_{i,a}}\overrightarrow{V_{i+1,b}}$ defines a $(6\Delta+1)$-queue layout of $G[V_{i,a},V_{i+1,b}]$. Each edge in $G[V_{i,a},V_{i+1,b}]$ is either direct or hooked. We first show that one queue suffices for direct edges in $G[V_{i,a},V_{i+1,b}]$. Suppose on the contrary that there are two direct edges $vw$ and $xy$ in $G[V_{i,a},V_{i+1,b}]$ with $v \prec x$ in $\overrightarrow{V_{i,a}}$ and $y \prec  w$ in $\overrightarrow{V_{i+1,b}}$. Then $vw$ and $xy$ are drawn between $C_i$ and $C_{i+1}$ with $v \prec w$ in $C_i$ and $x \prec y$ in $C_{i+1}$. Thus $vw$ and $xy$ cross. This contradiction shows that  one queue suffices for direct edges in $G[V_{i,a},V_{i+1,b}]$. 

Now consider a hooked edge $vw$ in $G[V_{i,a},V_{i+1,b}]$ with $v\in V_{i,a}$ and $w\in V_{i+1,b}$. 
Let $vxyzw$ be the corresponding path in $G'$. 
Then $vx$, $xy$ and $wz$ are tree edges in $G'$, and $yz$ is a level edge in $G'$. 
Moreover, $x\in V'_{i+1,c}$ for some $c\geq 0$, 
and $y,z\in V'_{i+2,d}$ for some $d\geq 0$ by \cref{WellLayered}(\ref{WellLayered-NonTreeSpan}).
Since $vx$, $xy$ and $wz$ are tree edges, by \cref{WellLayered}(\ref{WellLayered-TreeSpan}), 
$c-\Delta a =\alpha$ and $d-\Delta c =\beta$ and $d-\Delta b =\gamma$ for some $\alpha,\beta,\gamma\in\{0,1,\dots,2\Delta-1\}$. 
Thus $d = \beta + \Delta c = \gamma + \Delta b$, implying
$\Delta (c-b) = \gamma-\beta \leq 2\Delta-1$ and $c-b \leq 1$. 
Similarly, 
$\Delta (b-c) = \beta-\gamma \leq 2\Delta-1$ and $b-c \leq 1$. 
Thus $c\in\{b-1,b,b+1\}$. 
Assign $vw$ to queue $Q^{\eta}_\gamma$ where $\eta=c-b$. This introduces $6\Delta$ queues. 

We claim that this is a valid queue assignment. 
Suppose on the contrary that there are hooked edges $vw$ and $pq$ in $Q^{\eta}_\gamma$ 
with $v \prec  p$ in $\overrightarrow{V_{i,a}}$ and $q \prec w$ in $\overrightarrow{V_{i+1,b}}$.
Let $vxyzw$ be the path corresponding  to $vw$ in $G'$. 
Let $prstq$ be the path corresponding  to $pq$ in $G'$. 
Then $x,y,z,r,s,t$ are distinct vertices, and $x,r\in V'_{i+1,b+\eta}$ and $y,z,s,t\in V'_{i+2,d}$ where $d=\Delta b + \gamma$. 
By \cref{WellLayered}(\ref{WellLayered-InterBag}) and since $v \prec p$ in  $\overrightarrow{V_{i,a}}$, 
we have $x \prec r$ in $\overrightarrow{V_{i+1,c}}$. 
This in turn implies that $y \prec s$ in $\overrightarrow{V_{i+2,d}}$ by \cref{WellLayered}(\ref{WellLayered-InterBag}).
Similarly, by \cref{WellLayered}(\ref{WellLayered-InterBag}) and since $q \prec w$ in $\overrightarrow{V_{i+1,b}}$, 
we have $t \prec z$ in $\overrightarrow{V_{i+2,d}}$. 
This implies that
$y  \prec  t  \prec  s  \prec  z$ or 
$y  \prec  t  \prec  z  \prec  s$ or 
$y  \prec  s  \prec  t  \prec  z  $ or 
$t  \prec  z  \prec  y  \prec  s$ or  
$t  \prec  y  \prec  s  \prec  z$ or  
$t  \prec  y  \prec  z  \prec  s$ in  $\overrightarrow{V_{i+2,d}}$. 
Thus $yz$ and $st$ either nest or cross in 
$\overrightarrow{V_{i+2,d}}$, which contradicts \cref{WellLayered}(\ref{WellLayered-InBag}).
Hence no two edges in $Q^{\eta}_{\gamma}$ nest.
Therefore
$(Q^\eta_j:\eta\in\{-1,0,1\},j\in\{0,1,\dots,2\Delta-1\})$ is a $6\Delta$-queue layout of the hooked edges in 
$G[V_{i,a},V_{i,a+1}]$ using the ordering $\overrightarrow{V_{i,a}}\overrightarrow{V_{i,a+1}}$. 
Including one queue for the direct edges, we obtain a $(6\Delta+1)$-queue layout of $G[V_{i,a},V_{i,a+1}]$ using the ordering $\overrightarrow{V_{i,a}}\overrightarrow{V_{i,a+1}}$. 
This proves (\ref{Structure-InterBag}). 

Finally we prove (\ref{Structure-InBag}). Consider an edge $vw$ with both end points $v$ and $w$ in $V_{i,a}$ for some $i,a\geq 0$. Then $vw$ is a level edge. Let $vxyw$ be the corresponding path in $G'$. 
Then $xy$ is a level non-tree edge of $G'$ with $v,w\in V'_{i+1}$. 
By \cref{WellLayered}(\ref{WellLayered-NonTreeSpan}), both $x$ and $y$ are in $V'_{i+1,b}$ for some $b\geq 0$.
Since $vx$ and $wy$ are tree edges in $G'$
by \cref{WellLayered}(\ref{WellLayered-TreeSpan}), we have $b-\Delta a \in\{0,1,\dots,2\Delta-1\}$.
Assign $vw$ to queue $Q_{b-\Delta a}$. 
Suppose on the contrary that $v \prec p \prec q \prec w$ for two edges $vw$ and $pq$ in $Q_{b-\Delta a}$. 
Let $vxyw$ be the  path in $G'$ corresponding to $vw$. 
Let $pstq$ be the  path in $G'$ corresponding to $pq$. 
Then $x,y,s,t\in V'_{i+1,b}$. 
Note that $vx$, $wy$, $ps$ and $qt$ are tree edges in $G'$, while  $xy$ and $st$ are level edges in $G'$. 
Since $v\prec p$, we have $x \prec s$ in $\overrightarrow{V_{i+1,b}}$ by \cref{WellLayered}(\ref{WellLayered-InterBag}).
Similarly, since $q \prec q$, we have $t \prec y$  in $\overrightarrow{V_{i+1,b}}$ by \cref{WellLayered}(\ref{WellLayered-InterBag}). Thus
$x  \prec s  \prec t \prec  y$ or $x \prec  t  \prec s  \prec y$ or $x \prec  t  \prec y  \prec s$ or $t \prec  y \prec  x  \prec s$ or $t \prec  x  \prec s  \prec y $ or $t \prec  x \prec  y \prec  s$ in $\overrightarrow{V_{i+1,b}}$
 In each case, $xy$ and $st$ either nest or cross, which contradicts 
\cref{WellLayered}(\ref{WellLayered-InBag}).
Thus no two edges in $Q_{b-\Delta a}$ are nested, and
$(Q_j: j \in\{0,1,\dots,2\Delta-1\}$ is a $2\Delta$-queue layout of $G[V_{i,a}]$ using ordering $\overrightarrow{V_{i,a}}$.
This proves (\ref{Structure-InBag}). 
\end{proof}

We now show that \cref{Structure} leads to a $O(\Delta^2)$-queue layout of an arbitrary planar graph.


\begin{proof}[Proof of \cref{Planar}] 
Let $\{V_{i,a}: i\in \{0,1,\dots,t\}, a\in\{0,1,\dots,n_i\}$ be the partition of of $V(G)$ from \cref{Structure}. Let $\overrightarrow{V_i}$ be the ordering $\overrightarrow{V_{i,0}}\overrightarrow{V_{i,1}}\dots\overrightarrow{V_{i,n_i}}$ of $V_i$. 
Consider the ordering 
$\overrightarrow{V_0}\overrightarrow{V_1}\dots\overrightarrow{V_t}$ of $V(G)$. 

An edge with both endpoints in $V_{i,a}$ cannot nest an edge with both endpoints in $V_{j,b}$ for $(i,a)\neq(j,b)$, and $2\Delta$ queues suffice for such edges by \cref{Structure}(\ref{Structure-InBag}).
An edge with endpoints in $V_{i,a}$ and $V_{i,a+1}$ cannot nest an edge with endpoints in $V_{j,b}$ and $V_{j,b+1}$ for $(i,a)\neq(j,b)$, and one queue suffice for such edges by \cref{Structure}(\ref{Structure-AdjacentBags}).
By \cref{Structure}(\ref{Structure-LevelEdgeSpan}) this accounts for all level edges.
Thus $2\Delta+1$ queues suffice for level edges. 

For $i,a,b\geq 0$, by \cref{Structure}(\ref{Structure-InterBag}), there is a queue layout  $(Q^{i,a,b}_j:j\in\{1,2,\dots,6\Delta+1\})$  of $G[V_{i,a}, V_{i+1,b}]$. For $j\in\{1,2,\dots,6\Delta+1\}$ and $\alpha\in\{-1,0,\dots,2\Delta\}$, let $$Q^\alpha_j:=\bigcup\{ Q^{i,a,b}_j : i,a,b \geq 0,\, b - \Delta a = \alpha  \}.$$
By \cref{Structure}(\ref{Structure-TreeEdgeSpan}) and (\ref{Structure-NonTreeBinding}), this accounts for all binding edges. 

Suppose that binding edges $vw$ and $pq$ in some $Q^\alpha_j$ are nested with $v \prec p \prec q \prec w$ in our ordering of $V(G)$. Then $v\in V_{i,a}$, $p\in V_{i,b}$, $q\in V_{i+1,c}$ and $w\in V_{i+1,d}$ for some $i,a,b,c,d\geq 0$ with $\alpha=d-\Delta a = c- \Delta b$. Thus $d-c=\Delta(a-b)$. Since $v \prec p \prec q \prec w$ in the ordering, $d-c\geq 0$ and $a-b\leq 0$. Thus $a=b$ and $c=d$, which contradicts \cref{Structure}(\ref{Structure-InterBag}). 

Thus $(2\Delta+2)(6\Delta+1)$ queues suffice for binding edges. 
In total we use $(2\Delta+2)(6\Delta+1)+2\Delta+1= 12\Delta^2+16\Delta+3$ queues
\end{proof}

We emphasise that the vertex ordering used in the proof of \cref{Planar} is identical to that used by \citet{BFGMMRU18}. Our contribution is to show that $O(\Delta^2)$ queues suffice rather than the $O(\Delta^6)$ queues used by \citet{BFGMMRU18}. On the other hand, we now show that up to a constant factor our analysis is tight. That is, the above ordering can produce $\Omega(\Delta^2)$ pairwise nested edges (a so-called `rainbow'), which each must be assigned to a distinct queue. 
Start with a rooted binary tree with $2\Delta^2$ leaves. Label the leaves left-right
$$v_{1,1},\dots,v_{1,\Delta}; \dots ; v_{\Delta,1},\dots,v_{\Delta,\Delta}; 
w_{\Delta,\Delta},\dots,w_{\Delta,1}; \dots ; w_{1,\Delta},\dots,w_{1,1}.$$
Subdivide the edge incident to each leaf $v_{i,j}$. 
Let $G$ be the graph obtained by adding the edge $v_{i,j}w_{i,j}$ for $i,j\in\{1,2,\dots,\Delta\}$, as illustrated in \cref{TightExample}. Let $G'$ be the well-layered graph obtained by subdividing the edges of $G$  as described above. 
Thus each edge $v_{i,j}w_{i,j}$ is replaced by a path $v_{i,j}x_{i,j}y_{i,j}z_{i,j}w_{i,j}$. Vertices $y_{i,j}$ and $z_{i,j}$, which are on level 0, are joined by a level edge. Edges $v_{i,j}x_{i,j}$, $x_{i,j}y_{i,j}$ and $z_{i,j}w_{i,j}$ are tree edges. 
The above algorithm does not introduce any parallel edges, since each level edge joins vertices on level 0. Vertices $v_{i,j}$ are on level 1, and vertices $w_{i,j}$ are on level 2. It follows that $g(w_{i,j})=0$ and $g(v_{i,j})=i-1$  for all $i,j$. Thus the vertex ordering of $G$ produced by the above algorithm (after removing subdivision vertices) includes the sequence
$$w_{\Delta,\Delta},\dots,w_{\Delta,1}; \dots ; w_{1,\Delta},\dots,w_{1,1}, 
 v_{1,1},\dots,v_{1,\Delta}; \dots; v_{\Delta,1},\dots,v_{\Delta,\Delta}; .$$
Here, $v_{i,j}w_{i,j}$ is nested with $v_{i',j'}w_{i',j'}$ for $(i,j)\neq (i',j')$. 
Thus $\Delta^2$ queues are needed, as claimed. Curiously this example has maximum degree 3. 

\begin{figure}[!h]
\includegraphics[width=\textwidth]{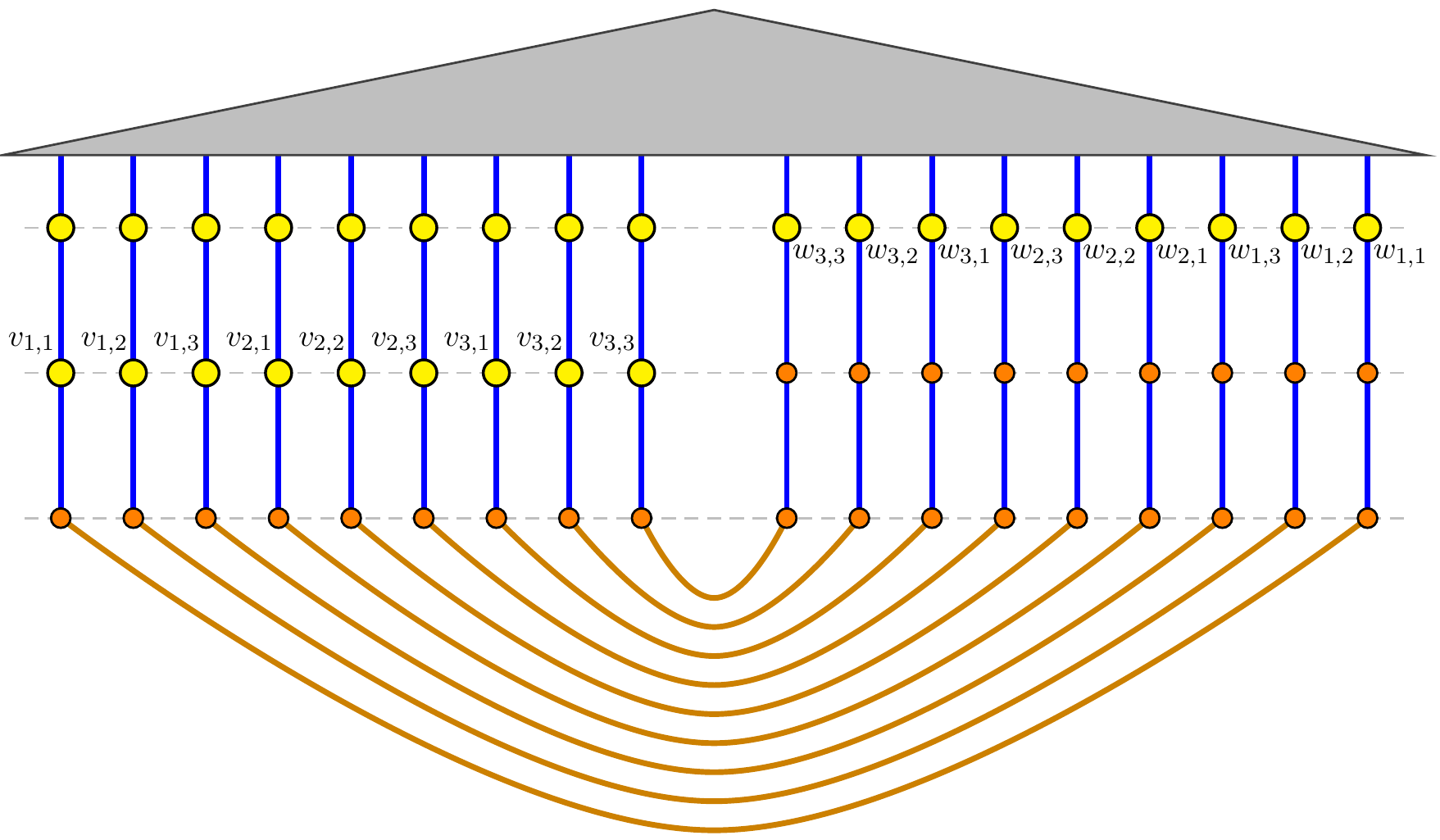}
\caption{Example where the algorithm uses $\Delta^2$ queues. \label{TightExample}}
\end{figure}

\section{Graphs of Bounded Genus}

This section proves our results for graphs of bounded Euler genus (\cref{general} which implies \cref{Genus}). The next lemma is the key. 




\begin{lemma}
\label{MakePlanar}
Let $G$ be a connected graph $G$ with Euler genus $g$. 
For every bfs layering $V_0,V_1,\dots,V_t$ of $G$, 
there is a set $Z\subseteq V(G)$ with at most $2g$ vertices in each layer $V_i$, such that $G-Z$ is planar. 
\end{lemma}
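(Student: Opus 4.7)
The plan is to induct on the Euler genus $g$. The base case $g = 0$ is immediate: $G$ is planar, so $Z = \emptyset$ works.

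For the inductive step, fix a $2$-cellular embedding of $G$ in a surface $\Sigma$ of Euler genus $g \geq 1$, and let $T$ be the BFS spanning tree of the given layering, rooted at $r \in V_0$. For every non-tree edge $e = uv$, let $C_e$ be the fundamental cycle of $T$ with respect to $e$, that is, the tree paths from $u$ and $v$ to their lowest common ancestor closed up by $e$. Since $u$ and $v$ lie in the same or adjacent layers and both tree paths descend monotonically through the layers, $C_e$ meets each $V_i$ in at most two vertices. Because the embedding is $2$-cellular, the induced map $H_1(G;\mathbb{Z}/2) \twoheadrightarrow H_1(\Sigma;\mathbb{Z}/2)$ is surjective; since $H_1(\Sigma;\mathbb{Z}/2) \neq 0$ for $g \geq 1$, at least one fundamental cycle $C := C_e$ represents a non-trivial $\mathbb{Z}/2$-homology class and is in particular non-contractible in $\Sigma$.

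Set $G' := G - V(C)$. All edges of $C$ are deleted along with their endpoints, so the inherited embedding of $G'$ avoids $C$ and lifts to an embedding on the surface obtained from $\Sigma$ by cutting along $C$ and capping the resulting boundary components with disks. This capped surface has total Euler genus at most $g - 1$, so every connected component of $G'$ has Euler genus at most $g - 1$. The inductive hypothesis applied to each component of $G'$ yields a planarizing set with at most $2(g - 1)$ vertices per BFS layer of that component; let $Z'$ be the union of these sets. Then $Z := V(C) \cup Z'$ satisfies $|Z \cap V_i| \leq 2 + 2(g-1) = 2g$, and $G - Z = G' - Z'$ is planar.

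The main obstacle is that the BFS layering of $G$ restricted to $V(G')$ is generally not a BFS layering of $G'$: after removing $V(C)$, a vertex $v \in V_j \cap V(G')$ may be left with no neighbour in $V_{j-1} \cap V(G')$, so no BFS tree of $G'$ compatible with the inherited layering need exist. The cleanest remedy is to strengthen the inductive statement so that it applies not just to BFS layerings but to any layering of $G$ that admits a spanning forest with edges only between consecutive layers. BFS layerings satisfy this, and deleting $V(C)$ preserves the property, witnessed by $T - V(C)$ possibly augmented by a few binding edges of $G'$ to reconnect its components. With this strengthening, the per-layer bookkeeping goes through and the induction closes.
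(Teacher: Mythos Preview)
Your approach---induct on the Euler genus by deleting a single thin non-contractible fundamental cycle and recursing---is genuinely different from the paper's. The paper finds all $g$ cycles in one shot via a tree--cotree decomposition: take the BFS tree $T$, take a spanning tree $T^*$ of the dual restricted to non-$T$ edges, and let $Q$ be the $g$ remaining edges; the set $Z$ is the union of the $g$ fundamental cycles of $T$ through the edges of $Q$. Planarity of $G-Z$ is then verified by a direct Euler-formula computation on the surface obtained by cutting along $Z$. Because all $g$ cycles are fundamental cycles of the \emph{same} BFS tree, each automatically meets every layer at most twice, and no recursion---hence no degradation of the BFS structure---is needed.

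You correctly flag the obstacle in your approach, but the proposed repair does not close the gap. The strengthened hypothesis ``the layering admits a spanning forest with only binding edges'' is problematic under either reading:
\begin{itemize}
\item If you mean a spanning \emph{tree} of each component $H$ of $G' = G - V(C)$, then $T-V(C)$ is generally not one, and the parenthetical ``possibly augmented by a few binding edges of $G'$'' is precisely the unproved step. The trees of $T-V(C)$ lying inside a single component $H$ may be joinable in $H$ only by level edges. (A toy instance: if $a,b$ are the two children of the root $r$, with further children $c,d$ of $a$ and $e,f$ of $b$, and the non-tree edges are the level edges $ce$ and $df$, then deleting the fundamental cycle through $ce$ leaves the component $\{d,f\}$ whose only edge is level. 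Nothing prevents the same phenomenon when the leftover component has positive genus.)
\item If you mean any forest spanning $V(H)$, then $T-V(C)$ does witness the hypothesis, but the hypothesis is now too weak to drive the inductive step: fundamental cycles exist only for non-forest edges whose endpoints lie in the same tree of the forest, and these need not generate $H_1(H;\mathbb{Z}/2)$. Completing the forest to a spanning tree of $H$ may force level edges into the tree, and then the fundamental cycles through them no longer meet each layer at most twice.
\end{itemize}
Either way, the claim ``the induction closes'' is not justified. The clean fix is exactly what the paper does: take all $g$ fundamental cycles with respect to the original BFS tree simultaneously, so the BFS structure is never lost.
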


\begin{proof}
Fix an embedding of $G$ in a surface of Euler genus $g$. Say $G$ has $n$ vertices, $m$ edges, and $f$ faces. By Euler's formula, $n-m+f=2-g$. Let $V_0,V_1,\dots,V_t$ be a bfs layering of $G$ rooted at some vertex $r$. Let $T$ be the corresponding bfs spanning tree. Let $D$ be the graph with $V(D)=F(G)$, where for each edge $e$ of $G-E(T)$, if $f_1$ and $f_2$ are the faces of $G$ with $e$ on their boundary, then there is an edge $f_1f_2$ in $D$. (Think of $D$ as the spanning subgraph of $G^*$ consisting of those edges that do not cross edges in $T$.)\  Note that $|V(D)|=f=2-g - n + m$ and $|E(D)|=m-(n-1)= |V(D)|-1+g$. Since $T$ is a tree, $D$ is connected; see \citep[Lemma~11]{DMW17} for a proof. Let $T^*$ be a spanning tree of $D$. Let $Q:=E(D)\setminus E(T^*)$. Thus $|Q|=g$. Say $Q=\{v_1w_1,v_2w_2,\dots,v_gw_g\}$. For $i\in\{1,2,\dots,g\}$, let $Z_i$ be the union of the $v_ir$-path and the $w_ir$-path in $T$, plus the edge $v_iw_i$. Let $Z$ be $Z_1\cup Z_2\cup\dots\cup Z_g$. Say $Z$ has $p$ vertices and $q$ edges. Since $Z$ consists of a subtree of $T$ plus the $g$ edges in $Q$, we have $q = p-1+g$. 

We now describe how to `cut' along the edges of $Z$ to obtain a new graph $G'$; see Figure~\ref{Cutting}. First, each edge $e$ of $Z$ is replaced by two edges $e'$ and $e''$ in $G'$. Each vertex of $G$ incident with no edges in $Z$ is untouched. Consider a vertex $v$ of $G$ incident with edges $e_1,e_2,\dots,e_d$ in $Z$ in clockwise order. In $G'$ replace $v$ by new vertices $v_1,v_2,\dots,v_d$, where $v_i$ is incident with $e'_i$, $e''_{i+1}$ and all the edges incident with $v$ clockwise from $e_i$ to $e_{i+1}$ (exclusive). Here $e_{d+1}$ means $e_1$ and $e''_{d+1}$ means $e''_1$. This operation defines a cyclic ordering of the edges in $G'$ incident with each vertex (where $e''_{i+1}$ is followed by $e'_i$ in the cyclic order at $v_i$). This in turn defines an embedding of $G'$ in some orientable surface. (Note that if $G$ is embedded in a non-orientable surface, then the edge signatures for $G$ are ignored in the embedding of $G'$.)\ 

\begin{figure}[h]
\centering
\includegraphics{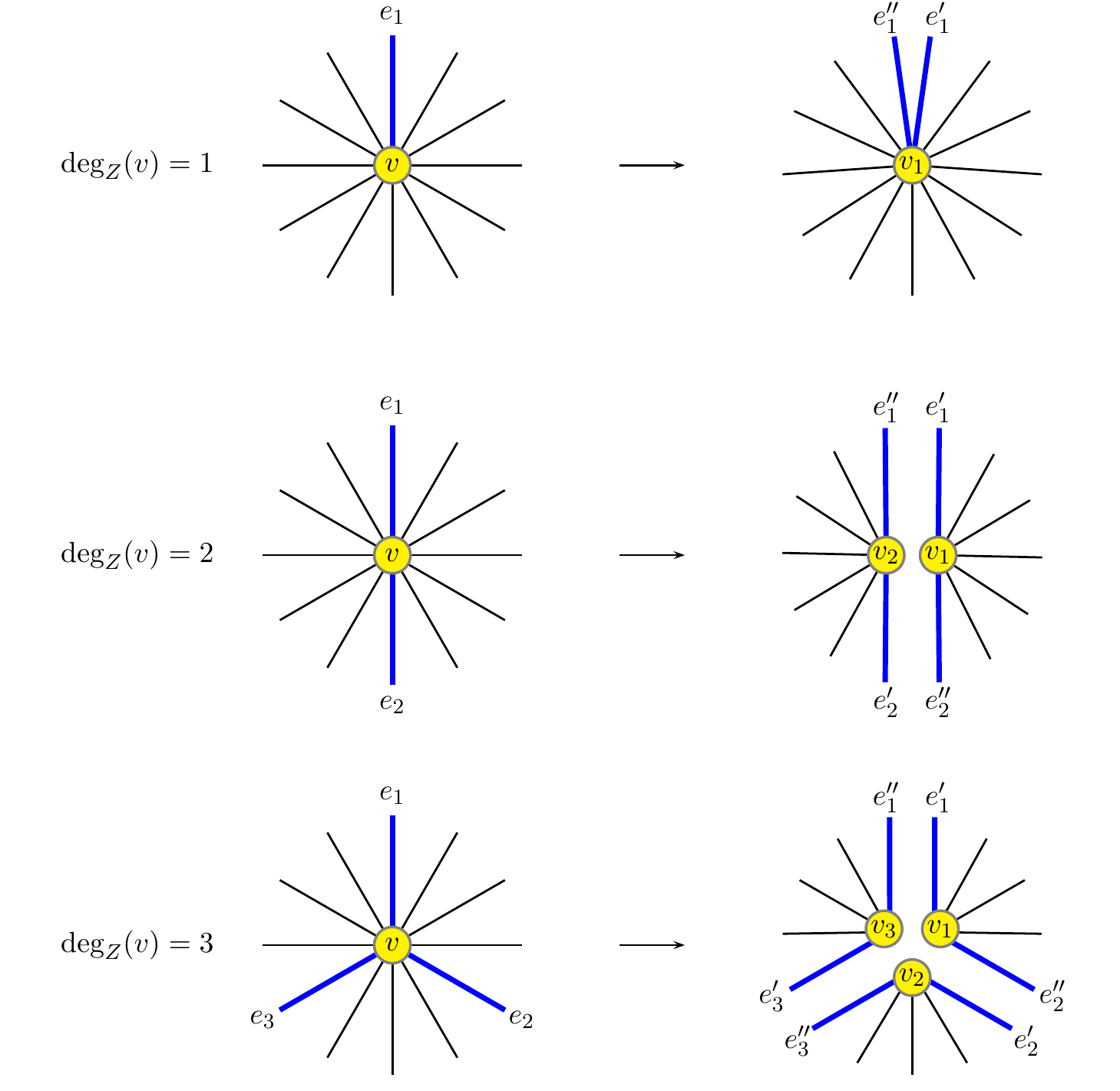}
\caption{Cutting the blue edges in $Z$ at each vertex.\label{Cutting}}
\end{figure}

Say $G'$ has $n'$ vertices and $m'$ edges, and the embedding of $G$' has $f'$ faces and Euler genus $g'$. Each vertex $v$ in $G$ with degree $d$ in $Z$ is replaced by $d$ vertices in $G'$. Each edge in $Z$ is replaced by two edges in $G'$, while each edge of $G-E(Z)$ is maintained in $G'$. Thus 
$$n' = n -p + \sum_{v\in V(G)}\deg_Z(v) = n + 2q -p  = n + 2(p-1+g) - p = n +p - 2 + 2g$$ and 
$m' = m + q = m + p-1 + g$.  Each face of $G$ is preserved in $G'$. Say $r$ new faces are created by the cutting. Thus $f'=f+r$. Since $D$ is connected, it follows that $G'$ is connected. By Euler's formula, $n'-m'+f'=2-g'$. Thus $(n +p - 2 + 2g) - (m + p-1 + g) + (f+r) = 2-g'$, implying $(n-m+f)   - 1 + g + r = 2-g'$. Hence $(2-g)   - 1 + g + r = 2-g'$, implying $g' = 1-r$. Since $r\geq 1$ and $g'\geq 0$, we have $g'=0$ and $r=1$. Therefore $G'$ is planar. 

Note that $G-V(Z)$ is a subgraph of $G'$, and $G-V(Z)$ is planar. By construction, each path $Z_i$ has at most two vertices in each layer $V_j$. Thus $Z$ has at most $2g$ vertices in each $V_j$. 
\end{proof}



We need the following lemma of independent interest. 

\begin{lemma}
\label{reorder}
If a graph $G$ has a $k$-queue layout, and $V_0,V_1,\dots,V_t$
is a layering of $G$, then $G$ has a $3k$-queue layout using ordering
$V_0,V_1,\dots,V_t$. 
\end{lemma}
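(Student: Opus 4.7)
The plan is to reuse the given $k$-queue layout $(\preceq, E_1, \dots, E_k)$ essentially verbatim, but to refine each queue $E_\ell$ into three new queues adapted to the layered ordering. Define a new linear order $\preceq'$ on $V(G)$ by concatenating $\preceq|_{V_0}, \preceq|_{V_1}, \dots, \preceq|_{V_t}$, so that $\preceq'$ agrees with $\preceq$ on each $V_i$ and places every vertex of $V_i$ before every vertex of $V_{i+1}$. Because $(V_0,\dots,V_t)$ is a layering, every edge of $G$ is either a \emph{level} edge with both endpoints in a common $V_i$, or a \emph{binding} edge with one endpoint in $V_i$ and the other in $V_{i+1}$.

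For each queue $E_\ell$ of the given layout, split its edges as $E_\ell^0 \cup E_\ell^+ \cup E_\ell^-$, where $E_\ell^0$ collects its level edges, $E_\ell^+$ collects its binding edges $vw$ whose $V_i$-endpoint $\preceq$-precedes its $V_{i+1}$-endpoint, and $E_\ell^-$ collects its binding edges on which the $V_{i+1}$-endpoint $\preceq$-precedes the $V_i$-endpoint. This yields $3k$ candidate queues in total, and it remains to check that no two edges in any one of them are nested with respect to $\preceq'$. The class $E_\ell^0$ is the easy case: two level edges lying in distinct layers occupy disjoint ranges in $\preceq'$ and so cannot nest, while two level edges inside a common $V_i$ keep their $\preceq$-order, so non-nesting in $E_\ell$ transfers directly to $\preceq'$.

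The interesting case is $E_\ell^+$ (and symmetrically $E_\ell^-$). Suppose $vw, xy \in E_\ell^+$ are nested in $\preceq'$ with $v \prec' x \prec' y \prec' w$. A short analysis of the layer-block structure of $\preceq'$ shows that such a nesting pattern between two binding edges forces both to span the same pair of adjacent layers, so $v,x \in V_i$ and $y,w \in V_{i+1}$ for a common $i$, with $v \prec x$ inherited from $\preceq|_{V_i}$ and $y \prec w$ inherited from $\preceq|_{V_{i+1}}$. Combining these with the defining inequalities $v \prec w$ and $x \prec y$ of $E_\ell^+$ produces the chain $v \prec x \prec y \prec w$ in $\preceq$, contradicting that $vw$ and $xy$ share the queue $E_\ell$. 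For $E_\ell^-$ the defining inequalities are instead $w \prec v$ and $y \prec x$; chaining them with the same layer-block equalities yields $y \prec w \prec v \prec x$ in $\preceq$, so that $xy$ nests $vw$ in the original layout, again a contradiction.

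The only mildly delicate point is seeing why a three-way split is actually necessary: a level edge inside $V_i$ and a binding edge between $V_{i-1}$ and $V_i$ that merely \emph{crossed} in $\preceq$ can become nested in $\preceq'$ once $V_{i-1}$ is pushed entirely to the left of $V_i$, forcing level edges to be separated from binding edges; and two binding edges joining $V_i$ to $V_{i+1}$ with opposite $\preceq$-orientations can likewise flip from crossing to nesting under the reordering, forcing the $\pm$ split. Beyond identifying this three-way split, the proof is a routine chain of inequalities and requires no tools beyond the definition of a queue layout.
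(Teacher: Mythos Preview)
Your proof is correct and essentially identical to the paper's: the paper likewise defines $\preceq'$ by ordering each $V_i$ according to $\preceq$, splits each queue into level edges, ``forward'' binding edges, and ``backward'' binding edges, and verifies non-nesting by the same chain-of-inequalities argument. Your extra paragraph explaining \emph{why} three queues per original queue are needed is a nice addition not present in the paper.
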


\begin{proof}  Say $E_1,E_2,\dots,E_k$ is the edge-partition and $\preceq$ is the ordering of $V(G)$ in a $k$-queue layout of $G$. For $a\in\{1,2,\dots,k\}$, 
let $X_a$ be the set of edges $vw\in Q_a$ with $v,w\in V_i$ for some $i$; 
let $Y_a$ be the set of edges $vw \in Q_a$ with $v \prec w$ and $v \in V_i$ and $w \in
V_{i+1}$ for some $i$; and 
let $Z_a$ be the set of edges $vw \in Q_a$ with $w \prec v$ and $v \in V_i$ and $w \in
V_{i+1}$ for some $i$.
Then $X_1,Y_1,Z_1,X_2,Y_2,Z_2,\dots,X_k,Y_k,Z_k$ is a partition of $E(G)$. 

Let $\preceq'$ be the ordering $V_0,V_1,\dots,V_t$ of $V(G)$ where each $V_i$ is ordered by $\preceq$. 
No two edges in some set $X_a$ are nested in $\preceq'$, as otherwise the same two edges would be in $Q_a$ and would be nested in $\preceq$. 
Suppose that $v \preceq' p \preceq' q \preceq' w$ for some edges $vw,pq\in Y_a$. 
So $v,p \in V_i$ and $w,q \in V_{i+1}$ for some $i$, and $v \prec p$ and $q \prec w$. 
Now $p \prec q$ by the definition of $Y_a$. 
Hence $v \prec p \prec q \prec w$, which is a contradiction since both $vw$ and $pq$ are in $Q_a$. 
Thus no two edges in $Y_a$ are nested  in $\preceq'$.
By symmetry, no two edges in $Z_a$ are nested in $\preceq'$. 
Hence $\preceq'$ is the ordering in a $3k$-queue layout of $G$.
\end{proof}

We now prove \cref{general}, which says that if $\mathcal{G}$ is a hereditary class of graphs, such that every planar graph in $\mathcal{G}$ has queue-number at most $k$, then every graph in $\mathcal{G}$ with Euler genus $g$ has queue-number at most $3k+4g$.

\begin{proof}[Proof of \cref{general}]
Let $G$ be a graph in $\mathcal{G}$ with Euler genus $g$. 
Since the queue-number of $G$ equals the maximum queue-number of the connected components of $G$, we may assume that $G$ is connected. 
Let $V_0,V_1,\dots,V_t$ be a bfs layering of $G$. 
By \cref{MakePlanar}, there is a set $Z\subseteq V(G)$ with at most $2g$ vertices in each layer $V_i$, such that $G-Z$ is planar. Since $\mathcal{G}$ is hereditary, $G-Z\in \mathcal{G}$, and by assumption $G-Z$ has a $k$-queue layout. 
Note that $V_0 \setminus Z,V_1\setminus Z,\dots,V_t\setminus Z$ is a layering of $G-Z$. 
By \cref{reorder}, $G-Z$ has a $3k$-queue layout using ordering $V_0\setminus Z,V_1 \setminus Z,\dots,V_t \setminus Z$.
Recall that $|V_j \cap Z|\leq 2g$ for all $j\in\{0,1,\dots,t\}$. 
Let $\preceq$ be the ordering
$$V_0 \cap Z, V_0 \setminus Z,\;V_1 \cap Z, V_1\setminus Z, \;\dots ,V_t \cap Z, V_t \setminus Z$$ of $V(G)$. 
where each set $V_j \cap Z$ is ordered arbitrarily, and each set $V_j\setminus Z$ is ordered according to the above $3k$-queue layout of $G-Z$. Edges of $G-Z$ inherit their queue assignment.  We now assign edges incident with vertices in $Z$ to queues. 
For $i \in \{1,\dots,2g\}$ and odd $j\geq 1$, put each edge incident with the $i$-th vertex in $V_j \cap Z$ in a new queue $S_i$. 
For $i \in \{1,\dots,2g\}$ and even $j\geq 0$, put each edge incident with the $i$-th vertex in $V_j \cap Z$ 
(not already assigned to a queue) in a new queue $T_i$. 
Suppose that two edges $vw$ and $pq$ in $S_i$ are nested, where $v\prec p \prec q \prec w$. Say $v\in V_a$ and $p\in V_b$ and $q\in V_c$ and $w\in V_d$. By construction, $a\leq b\leq c\leq d$. Since $vw$ is an edge, $d\leq a+1$. 
At least one endpoint of $vw$ is in $V_j\cap Z$ for some odd $j$, and one endpoint of $pq$ is in $V_\ell\cap Z$ for some odd $\ell$. Since $v,w,p,q$ are distinct, $j\neq \ell$. Thus $|i-j|\geq 2$. This is a contradiction since $a\leq b\leq c\leq d\leq a+1$. Thus $S_i$ is a queue. Similarly $T_i$ is a queue. 
Hence this step introduces $4g$ new queues. 
We obtain a $(3k+4g)$-queue layout of $G$.
\end{proof}

\section{Excluded Minors}

Whether the result of  \citet{BFGMMRU18} can be generalised for arbitrary excluded minors is an interesting question. That is, do graphs excluding a fixed minor and with bounded degree have bounded queue-number? It might even be true that graphs excluding a fixed minor have bounded queue-number.

  \let\oldthebibliography=\thebibliography
  \let\endoldthebibliography=\endthebibliography
  \renewenvironment{thebibliography}[1]{%
    \begin{oldthebibliography}{#1}%
      \setlength{\parskip}{0.05ex}%
      \setlength{\itemsep}{0.05ex}%
  }{\end{oldthebibliography}}

\def\soft#1{\leavevmode\setbox0=\hbox{h}\dimen7=\ht0\advance \dimen7
  by-1ex\relax\if t#1\relax\rlap{\raise.6\dimen7
  \hbox{\kern.3ex\char'47}}#1\relax\else\if T#1\relax
  \rlap{\raise.5\dimen7\hbox{\kern1.3ex\char'47}}#1\relax \else\if
  d#1\relax\rlap{\raise.5\dimen7\hbox{\kern.9ex \char'47}}#1\relax\else\if
  D#1\relax\rlap{\raise.5\dimen7 \hbox{\kern1.4ex\char'47}}#1\relax\else\if
  l#1\relax \rlap{\raise.5\dimen7\hbox{\kern.4ex\char'47}}#1\relax \else\if
  L#1\relax\rlap{\raise.5\dimen7\hbox{\kern.7ex
  \char'47}}#1\relax\else\message{accent \string\soft \space #1 not
  defined!}#1\relax\fi\fi\fi\fi\fi\fi}

\appendix
\section{Unsubdividing}

\citet{DujWoo04} proved that if some $(\leq c)$-subdivision of a graph $G$ has a $k$-queue layout, then $G$ has a $O(k^{2c})$-queue layout.  Here we improve this bound to $O(k^{c+1})$.
 
\begin{lemma}
\label{NewUnsubdivide}
For every $(\leq c)$-subdivision $G'$ of a graph $G$, if $G'$ has a $k$-queue layout using vertex ordering $\preceq$, then $G$ has a $\frac{2k}{2k-1}((2k)^{c+1}-1)$-queue layout using $\preceq$ restricted to $V(G)$. 
\end{lemma}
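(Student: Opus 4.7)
My plan is as follows. Let $G'$ be a $(\leq c)$-subdivision of $G$ with a $k$-queue layout given by vertex ordering $\preceq$ and edge partition $Q_1,\dots,Q_k$. For each edge $e=vw\in E(G)$, orient $e$ so that $v\prec w$ in $\preceq$, and let $P_e=u_0u_1\dots u_{j_e+1}$ be the corresponding subdivision path in $G'$, where $u_0=v$, $u_{j_e+1}=w$, and $j_e\leq c$. Define the \emph{signature} of $e$ to be the sequence
$\sigma(e) := \bigl((q_0,d_0),(q_1,d_1),\dots,(q_{j_e},d_{j_e})\bigr),$
where $q_i\in\{1,\dots,k\}$ is the queue containing $u_iu_{i+1}$ in $G'$, and $d_i\in\{+,-\}$ records whether $u_i\prec u_{i+1}$ or $u_{i+1}\prec u_i$. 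The number of distinct signatures is
$\sum_{j=0}^{c}(2k)^{j+1} = \tfrac{2k}{2k-1}\bigl((2k)^{c+1}-1\bigr).$
I would then assign every edge $e\in E(G)$ to a queue indexed by $\sigma(e)$, giving an ordering $\preceq|_{V(G)}$ and an edge partition into exactly this many classes.

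The entire proof reduces to verifying the following claim: no two edges of $G$ sharing the same signature are nested in $\preceq|_{V(G)}$. Suppose, for contradiction, that $\sigma(e_1)=\sigma(e_2)$ (so both paths have the same length $j+1$) and that $v_1\prec v_2\prec w_2\prec w_1$ in $\preceq$. Write $P_{e_1}=a_0a_1\dots a_{j+1}$ and $P_{e_2}=b_0b_1\dots b_{j+1}$, noting that internal subdivision vertices of distinct subdivided edges are disjoint, so $a_i\neq b_i$ for every $i$. I will prove by induction on $i\in\{0,1,\dots,j+1\}$ that $a_i\prec b_i$. The base case $i=0$ is exactly $v_1\prec v_2$, and the final step $i=j+1$ yields $w_1\prec w_2$, contradicting the nesting assumption $w_2\prec w_1$.

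The inductive step is where the argument really lives, and I expect it to be the main (though entirely tractable) obstacle. Assume $a_i\prec b_i$; since the edges $a_ia_{i+1}$ and $b_ib_{i+1}$ lie in the same queue $Q_{q_i}$ of $G'$, they cannot be nested in $\preceq$. Suppose for contradiction that $b_{i+1}\prec a_{i+1}$. In the case $d_i=+$ (so $a_i\prec a_{i+1}$ and $b_i\prec b_{i+1}$), either $a_{i+1}\prec b_i$, producing the impossible chain $a_{i+1}\prec b_i\prec b_{i+1}\prec a_{i+1}$, or $b_i\prec a_{i+1}$, producing the nesting $a_i\prec b_i\prec b_{i+1}\prec a_{i+1}$ inside $Q_{q_i}$. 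In the case $d_i=-$ (so $a_{i+1}\prec a_i$ and $b_{i+1}\prec b_i$), the assumption $b_{i+1}\prec a_{i+1}$ directly yields the nesting $b_{i+1}\prec a_{i+1}\prec a_i\prec b_i$ inside $Q_{q_i}$. Either way we reach a contradiction, so $a_{i+1}\prec b_{i+1}$, completing the induction and hence the proof.
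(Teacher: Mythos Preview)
Your proof is correct and takes essentially the same approach as the paper's: the signature $\sigma(e)$ is exactly the pair $(f(e),q(e))$ used there (with the subdivision count encoded by the length of the tuple), the queue count is computed the same way, and the core argument is the same induction showing $a_i\prec b_i$ for all $i$. You simply spell out the inductive step in more detail than the paper does.
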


\begin{proof}
Let $E_1,\dots,E_k$ be the partition of $E(G')$ into queues. For each edge $xy\in E_i$, let $q(xy):=i$. For distinct vertices $a,b\in V(G')$, let $f(a,b):=1$ if $a\prec b$ and let $f(a,b):=-1$ if $b\prec a$. For $\ell\in\{0,1,\dots,c\}$, let $X_\ell$ be the set of edges in $G$ that are subdivided exactly $\ell$ times in $G'$. We will use distinct sets of queues for the $X_\ell$. Consider an edge $vw$ in $X_\ell$ with $v \prec w$. Say $v=x_0,x_1,\dots, x_\ell, x_{\ell+1}=w$ is the corresponding path in $G'$. Let $f(vw):=( f(x_0,x_1), \dots, f( x_\ell, x_{\ell+1}) )$ and $q(vw) := ( q(x_0,x_1),  \dots, q(x_{\ell},x_{\ell+1}) )$. Consider  edges $vw,pq\in X_\ell$ with $v,w,p,q$ distinct and $f(vw)=f(pq)$ and $g(vw)=g(pq)$. Assume $v\prec p$. Say $v=x_0,x_1,\dots, x_\ell, x_{\ell+1}=w$ and $p=y_0,y_1,\dots, x_\ell, x_{\ell+1}=q$ are the paths respectively corresponding to $vw$ and $pq$ in $G'$. Thus $f(x_i,x_{i+1})=f(y_i,y_{i+1})$ and $q(x_ix_{i+1})=q(y_iy_{i+1})$ for $i\in\{0,1,\dots,\ell\}$. Thus $x_ix_{i+1}$ and $y_iy_{i+1}$ are not nested. Since $v=x_0\prec y_0 = p$, it follows by induction that $x_i \prec y_i$ for $i\in\{0,1,\dots,\ell+1\}$. In particular, $w=x_{\ell+1} \prec y_{\ell+1}=q$.  Thus $vw$ and $pq$ are not nested. There are $2^{\ell+1}$ values for $f$, and $k^{\ell+1}$ values for $q$. Thus $(2k)^{\ell+1}$ queues suffice for $X_\ell$. In total, $\sum_{\ell=0}^c(2k)^{\ell+1}=\frac{2k}{2k-1}((2k)^{c+1}-1)$ queues suffice for $G$. 
\end{proof}

\end{document}